\theoremstyle{plain}
\newtheorem{theorem}{Theorem}
\newtheorem{lemma}[theorem]{Lemma}
\newtheorem{proposition}[theorem]{Proposition}
\newtheorem{corollary}[theorem]{Corollary}
\newtheorem{remark}[theorem]{Remark}
\numberwithin{theorem}{section}
\numberwithin{equation}{section}
\newcommand{\B}{\mathbb{B}}
\newcommand{\C}{\mathbb{C}}
\newcommand{\N}{\mathbb{N}}
\newcommand{\R}{\mathbb{R}}
\newcommand{\X}{\mathbb{X}}
\newcommand{\U}{\mathrm{U}}
\newcommand{\fL}{\mathfrak{L}}
\newcommand{\cT}{\mathcal{T}}
\newcommand{\cA}{\mathcal{A}}
\newcommand{\cS}{\mathcal{S}}
\newcommand{\cF}{\mathcal{F}}
\newcommand{\rU}{\mathrm{U}}
\newcommand{\vn}{\textbf{\textit{n}}}
\newcommand{\ip}[2]{\langle #1,#2 \rangle}
\newcommand{\bddfG}{L^\infty(G)}
\newcommand{\F}{\cF^2(\C^n)}
\newcommand{\bddf}{L^\infty(\C^n)}
\newcommand{\Ltwof}{L^2(\C^n)}
\newcommand{\cbuC}{\mathcal{C}_{b,u}(\C^n)}
\newcommand{\cbuCGinvar}{\mathcal{C}_{b,u}(\C^n)^G}
\newcommand{\Berg}{\cA^2(\mathbb{B}^n)}
\newcommand{\bergfock}{\cA^2(\mathbb{X}^n)}
\newcommand{\homf}{\bddf^G}
\newcommand{\bdd}{\fL(\cF^2)}
\newcommand{\tclass}{\cS^1(\cF^2)}
\newcommand{\toepalgG}{\mathfrak{T}^G}
\newcommand{\toep}{\cT_{L^\infty}}
\newcommand{\toepalg}{\mathfrak{T}(L^\infty)}
\newcommand{\toepalgrad}{\mathfrak{T}^{qrad}(L^\infty)}
\newcommand{\cbuop}{C_{b,u}(\cF^2)}
\newcommand{\cbuopG}{C_{b,u}(\cF,G)}
\newcommand{\cbuopGinvar}{C_{b,u}(\cF)^G}
\newcommand{\bddX}{\fL(\bergfock)}
\newcommand{\bddqradX}{\fL(\bergfock)^{qrad}}
\newcommand{\bddqradBerg}{\fL(\Berg)^{qrad}}
\newcommand{\Gast}{\ast_G}
\newcommand{\Hom}{\mathrm{Hom}(G,\piG)}
\newcommand{\Avg}{\psi\ast_G}
\newcommand{\avg}{\psi \ast_G}
\newcommand{\piG}{\pi_G}
\newcommand{\TpiG}{\tilde{\pi}_G}
\newcommand{\conv}[1]{\varphi \ast #1}
\newcommand{\haar}[1]{\ d\mu(#1)}
\newcommand{\actf}[1]{\tau_{#1}}
\newcommand{\actop}[1]{L_{#1}}
\newcommand{\actm}[1]{L_{#1}}
\newcommand{\QRad}[1]{Q\text{-}Rad(#1)}
\newcommand{\qrad}[1]{q\text{-}rad(#1)}
\newcommand{\intG}[1]{\int_{G} #1 \ d\mu(A)}
\newcommand{\Grota}{G_2}
\newcommand{\Gtrans}{G_1}
\begin{document}

%========================================================================
%                 -TITLE-
%========================================================================

\keywords{Toeplitz operators, Fock space, Bergman space, commutative $C^*$-algebras, quantum harmonic analysis, }
\thanks{M.~M. was supported in part by NSF grant DMS-2000236.}
\title[Density of Toeplitz operators in Toeplitz subalgebras]{Density of Toeplitz operators in rotation-invariant Toeplitz algebras}

\author{Vishwa Dewage \& Mishko Mitkovski}
\address{Department of Mathematics, Clemson University\\
South Carolina, SC 29634, USA\\
E-Mail Dewage:vdewage@clemson.edu
E-Mail Mitkovski: mmitkov@clemson.edu}
\subjclass[2000]{22D25, 30H20, 47B35, 47L80}
 
\maketitle
%========================================================================
%                 -ABSTRACT-
%========================================================================
\begin{abstract} We use results and techniques from Werner's ``quantum harmonic analysis'' to show that $G$-invariant Toeplitz operators are norm dense in $G$-invariant Toeplitz algebras for all subgroups $G$ of the affine unitary group $U_n\ltimes \mathbb{C}^n$. Additionally, we prove that the quasi-radial Toeplitz operators are dense in the quasi-radial Toeplitz algebra over the Bergman space $\mathcal{A}^2(\mathbb{B}^n)$ and provide a constructive proof of SOT density of Toeplitz operators in the space of all bounded operators.
\end{abstract}

%========================================================================
%                 -INTRO-
%========================================================================

\section{Introduction}
Recently there has been a reemerging interest \cite{LS18,F19, FG23, FR23} in Werner's paper \cite{W84}  on ``quantum harmonic analysis''. At the heart of Werner's work was the quantum form of Wiener's Tauberian theorem. Many of his results were revisited and extended recently by Luef and Skrettingland \cite{LS18}. A particularly important observation was made by Fulsche \cite{F19} who showed that quantum harmonic analysis results and techniques can be applied in the study of Toeplitz algebras over the Fock space. More precisely, Fulsche proved that the algebra of bounded uniformly continuous operators on the Fock space, an important object of study in \cite{W84}, coincides with the Toeplitz algebra over the Fock space $\F$. In addition, as an application of the generalized Wiener's Tauberian theorem, he gave a different proof of the well-known important result of Xia \cite{X15} that the Toeplitz algebra is the norm closure of the set of all Toeplitz operators with bounded symbols. Lastly, as an application of the theory, Fulsche also proved the density of Toeplitz operators in commutative $G$-invariant Toeplitz algebras for fairly general additive subgroups $G$ of $\C^n$. However, as Fulsche himself noticed, his results exclude well-known commutative algebras such as the radial/quasi-radial Toeplitz algebras \cite{BV12, DO22, DOQ18, EM16, GKV03, GMV13, GV02, V10}, which are invariants of the rotation group and its subgroups. It was a general feeling, of at least some of the experts, that methods of quantum harmonic analysis are somehow tightly connected to $\C^n$ and its additive subgroups, and are therefore not applicable to algebras like the radial or the quasi-radial Toeplitz algebras. The main goal of this note is to show that this is not the case. We prove the density of Toeplitz operators with bounded uniformly continuous $G$-invariant symbols in $G$-invariant Toeplitz algebras for groups $G$ which are subgroups of $U_n\ltimes \C^n$. In particular, our results apply to radial or quasi-radial Toeplitz algebras. 

In \cite{B22}, Bauer raised the problem of the Gelfand theory and density of Toeplitz operators in the algebra $\toepalgrad$ over the Bergman space $\Berg$ over the unit ball $\B^n$ in $\C^n$. This question has been answered only for the Bergman space over the unit disc in $\C$ \cite{GMV13,S08}. In the hope of shedding some light on this problem, we prove the density of Toeplitz operators in the algebra $\toepalgrad$ over the Bergman space $\Berg$ using a similar approach.\\

Finally, we show that our method can also be used to prove that Toeplitz operators are SOT dense in bounded operators, giving a new, more constructive, proof of this well-known result of Engli\u{s} \cite{E91}.

\section{Preliminaries}

\subsection{The Fock space} The Gaussian measure $d\lambda=d\lambda_{n}$ is given by  $d\lambda(z)=e^{-\pi|z|^2}dz$, where $dz$ is the Lebesgue measure on $\C^n\simeq \R^{2n}$. The {\it Fock space} $\F$ is a closed subspace of $L^2(\C^n,d\lambda)$, consisting of all holomorphic functions on $\C^n$ that are square integrable with respect to $d\lambda$. It is a reproducing kernel Hilbert space with kernel $K_z$ given by
$$K_z(w)=e^{\pi\Bar{z}w}.$$
The normalized reproducing kernel $k_z$ is given by 
$$k_z(w)=\frac{K_z(w)}{\|K_z\|}=e^{\pi\Bar{z}w-\frac{\pi}{2}|z|^2}.$$
The Bergman projection $P:L^2(\C,d\lambda)\to \F$ is given by
$$Pf(z)=\ip{f}{K_z}=\int f(w)e^{\pi z\Bar{w}}\ d\lambda (w).$$ 
\subsection{Toeplitz operators} For $a\in\bddf$, the Toeplitz operator with symbol $a$, denote $T_a$, is given by $$T_a f(z)=P(af)(z)=\langle af,K_z\rangle; \ \ \forall \ z\in \C^n$$
where $P:\Ltwof \to \F$ is the Bergman projection.  The \textit{Toeplitz algebra}, denoted by $\toepalg$, is the closed sub-algebra of all bounded operators $\bdd$ generated by the set of all Toeplitz operators with bounded symbols. Since adjoints of Toeplitz operators are also Toeplitz operators, $\toepalg$ is a noncommutative $C^*$-algebra.\\

\subsection{Translations and convolutions}\label{sec:convolutions}
In quantum harmonic analysis translations and convolutions are induced by the usual action of $\C^n$ on itself by translations. We denote by $\tau_z$ the usual translations acting on functions and given by
$$\actf{z}a(w)=a(w-z), \ \forall w\in \C^n.$$
For $a\in \bddf$ and $\psi\in L^1(\C^n)$, the usual convolution $\psi\ast a$ is given by
$$\psi\ast a(z):=\int a(z-w)\ \psi(w)dw,\ \ \ z\in \C^n.$$
Then $\psi\ast a$ is a bounded uniformly continuous function on $\C^n$ and $\|\psi\ast a\|_\infty\leq \|\psi\|_1\|a\|_\infty$.

The action of $\C^n$ on the space of bounded operators $\bdd$, denoted $\actop{z}$, is given by the following \textit{operator translations}
$$\actop{z}S=W_zS W_z^*, \ \ S\in \bdd.$$ 
Here $W_z, z\in \C^n$ are the usual Weyl unitary operators on $\F$, given by 
$$W_zf(w)=k_z(w)f(w-z);\ \ w\in \C^n, \ f\in \F.$$
With operator translations being defined, for $\psi\in L^1(\C^n)$ and $S\in \bdd$, the \textit{operator convolution} $\psi\ast S$ is defined by
$$S\ast \psi :=\psi\ast S:=\int   \actop{z} S \ \psi(z)dz.$$
Similarly as for functions, $\|\psi\ast S\|\leq \|S\|\|\psi\|_1$, and each $\psi\ast S$ is uniformly continuous, i.e., the map $z\to \actop{z}(\psi\ast S)$ is continuous in operator norm. We denote by $\cbuop$ the algebra of bounded, uniformly continuous operators. 

We have the following basic properties of operator translations and convolutions. We point to \cite{W84, LS18, F19} for details. For $\psi,\psi_1,\psi_2\in L^1(\C^n)$ and $S\in\bdd$,
    \begin{enumerate}[label=(\roman*)]
        \item The map $z\to \actop{z}S$ is continuous in SOT.
        \item $\actop{z}(\psi\ast S)=(\actf{z} \psi)\ast a =\psi\ast (\actop{z} S) $.
        \item $\psi_1\ast (\psi_2\ast S)=(\psi_1\ast \psi_2)\ast S=(\psi_2\ast \psi_1)\ast S$.
    \end{enumerate}  
    
Additionally, one also defines an operator convolution between two operators in the following way. Let $\tclass$ denote the set of all trace class operators on $\F$.  Let $T$ and $S$ are two bounded operators, at least one of which is in trace class $\tclass$. One defines $T\ast S$ as a function $\C^n\to \C$ given by \[T\ast S(z):=\text{Tr}(TL_z(USU))\]
where $U$ is the self-adjoint operator on $\F$ given by
$$(Uf)(z)=f(-z),\ \ \ z\in \C^n,\ f\in \F.$$
Then $T\ast S\in \cbuC$ and $\|T\ast S\|_\infty\leq \|T\|\|S\|_1$ if e. g. $S\in \tclass$. Furthermore, if both $T, S\in  \tclass$, then $T\ast S\in L^1(\C^n)$.
For $A,C\in\bdd$, $B\in \tclass$, the following basic identities hold
    \begin{enumerate}[label=(\roman*)]
        \item $A\ast B= B\ast A$.
        \item $(A\ast B)\ast C=A\ast (B\ast C)$. 
    \end{enumerate}  

Finally, one defines the convolution $a\ast S$ for $a\in\bddf$ and $S\in \tclass$ by duality:
$$\ip{a\ast S}{A}_{tr}=\ip{a}{A\ast(USU)}_{tr}\ \ \ \forall A\in \tclass$$
where $\ip{A}{B}_{tr}=Tr(AB)$ and $\ip{f}{g}_{tr}=\int_{\C^n}f(z)g(z) dz$. Note that $a\ast S$ can be defined weakly as well, and the two definitions coincide. We have $a\ast S\in \cbuop$ and $\|a\ast S\|\leq \|a\|_\infty \|S\|_1$. Also if $A,B\in \tclass$ and $a\in \bddf$, we have
$$a\ast (A\ast B)=(a\ast A)\ast B.$$

\subsection{Toeplitz operators as convolutions} We denote by $\Phi\in \bdd$ the rank operator $\Phi:=k_0\otimes k_0$. For $a\in \bddf$, due to $L_z\Phi=k_z\otimes k_z$, it is easy to see that \[T_a=a\ast \Phi.\] This simple observation establishes the main connection between Toeplitz operators and quantum harmonic analysis. The following relations are then immediate consequences of the basic properties of convolutions: for $a\in L^\infty(G)$ and $\psi\in L^1(G)$, $\actop{z}(T_a)=T_{\actf{z} a}$ and  $\psi\ast T_a= T_{\psi \ast a}$

Since $a\ast \Psi\in \cbuop$ we immediately see that the set of all Toeplitz operators with bounded symbols, and consequently the whole Toeplitz algebra $\toepalg$ is contained in $\cbuop$. It was proved by Fulsche ~\cite{F19} that these two $C^*$ algebras coincide. We provide another argument for this fact in Section \ref{sec:cbu}.

\subsection{Berezin transform as a convolution} Recall that for $a\in\bddf$ the Berezin transform of $a$, denoted $B(a)$, is a function on $\C^n$ defined by
$$B(a)(z)=\ip{ak_z}{k_z}, \ z\in\C^n,$$
where $k_z$'s are the normalized reproducing kernels in $\F$.
Similarly, for $S\in\bdd$, the Berezin transform of the operator $S$, denoted $B(S)$, is a function on $\C^n$ defined by
$$B(S)(z)=\ip{Sk_z}{k_z}, \ z\in\C^n.$$
Clearly, $B(T_a)=B(a).$

We will denote by $\varphi$ the standard Gaussian function $$\varphi(z)=e^{-\pi|z|^2}=|\ip{k_0}{k_z}|^2,\ \ z\in \C^n.$$ Clearly, $\varphi=\Phi\ast \Phi$. It is not hard to check that 
\[B(a)=\varphi\ast a, \text{ and } B(S)=\Phi\ast S.\] Therefore, both $B(a)$ and $B(S)$ are uniformly continuous functions. Furthermore, since $\varphi=\Phi\ast \Phi$ we also have that 
\[\varphi\ast S=(\Phi\ast \Phi)\ast S=(\Phi\ast S)\ast \Phi= T_{B(S)},\] for every bounded operator $S$.

 \subsection{Action of the affine unitary group on the Fock space.}
Here we introduce the $G$-convolutions for subgroups of the affine unitary group. The group of all $n\times n$ unitary matrices $U_n$ act on functions on $\C^n$ via 
$$\actf{A}f(z)=f(A^{-1}z),\ \ \ z\in \C^n, A\in U_n.$$
and $A\mapsto \actf{A}$ defines a unitary representation of $U_n$ on $\F$.

 The semi-direct product of $U_n$ and $\C^n$, denoted $U_n\ltimes \C^n$ acts on $\C^n$ by rotations followed by translations:
 $$(A,z)\cdot w=Aw+z;\ \ \ z,w\in \C^n, A\in U_n.$$
 The unimodular group $U_n\ltimes \C^n$ has Haar measure given by
 $$d\mu_{U_n\ltimes \C^n}((A,z))=d\mu_{U_n}(A) dz.$$
  Since $\C^n$ is abelian, the Weyl representation of $\C^n$ and the representation of $U_n$ together induces a projective representation $\pi$ of $U_n\ltimes \C^n$ on $\F$, given for $g=(A_g,z_g)\in U_n\ltimes \C^n$ by
  $$\pi(g)f=j(g^{-1},\cdot)\actf{g}f, \ \ f\in \F$$
  where $j(g,\cdot)=k_{A_g^{-1}z_g}$ and $\actf{g}f(w)=f(g^{-1}w)=f(A_g^{-1}(w-z_g))$ for $w\in \C^n$.\\

Let $G$ be a subgroup of $U_n\ltimes \C^n$ and let $\piG$ be the restriction of the projective representation $\pi$ of $U_n\ltimes \C^n$ to $G$. Then $\piG$ also gives rise to a representation $\TpiG$ of $G$ acting on the Banach space $\bdd$, given by
$$\TpiG(g)S=\piG(g)S\piG(g)^*;\ \ \ S\in\bdd, \ g\in G.$$
We call $\TpiG(g)S$ the translation of $S$ by $g$ and is often denoted by $\actop{g}S$.
An operator $S\in\bdd$ is said to be $G$-uniformly continuous if $g\mapsto \TpiG(g) S$ is continuous with respect to the norm topology. The set of all $G$-uniformly continuous bounded operators, denoted $\cbuopG$ is a $C^*$-subalgebra of $\bdd$ containing the set of all compact operators.

 Let $L^1(G)$ denote the set of all functions on $G$ that are integrable w.r.t. the Haar measure $\mu:=\mu_G$. The $G$-convolution of $\psi\in L^1(G)$ and $S\in \bdd$, is given by
$$\Avg S=\int_G  \actop{g}S\ \psi(g) \haar{g}$$
where the integral is understood in the weak sense. Then 
$$\|\psi \ast_G S\|\leq \|S\| \|\psi\|_1$$
and in addition, $\Avg S$ is in $\cbuopG$.
By the above inequality, convolutions $S\mapsto \Avg S$ are continuous in norm topology. 

Also, we have the $G$-convolution of $\psi\in L^1(G)$ and $a\in \bddf$ given by
$$\Avg a (z)=\int_G a(g^{-1}\cdot z)\ \psi(g) \haar{g};\ \ z\in \C^n.$$
In addition, we get that,
$$\Avg T_a=T_{\Avg a}$$
where $T_a$ is the Toeplitz operator on $\F$ with symbol $a$. We explore this in detail in later sections. When $G=\C^n$ we get the usual convolutions.

\begin{remark}
    The symplectic group $Sp_n$ also acts on the Fock space via the double-valued metaplectic representation. Hence one could instead consider the affine symplectic group $Sp_n\ltimes \C^n$ mentioned in \cite{W84}. However, the metaplectic representation is not necessarily given by a geometric action on $\C^n$, unlike with $U_n$, which will be required later on. Hence we restrict our attention to $U_n$ which is isomorphic to $Sp_n\cap O_{2n}$. Nevertheless, it is an interesting question whether subgroups of $Sp_n$ exist, other than subgroups of $U_n\times\C^n$, where the representation is given in terms of the group’s action on $\C^n$.
\end{remark}

\subsection{\texorpdfstring{$G$}{G}-invariant functions and operators} Let $G<U_n\ltimes\C^n$. We say a function $a$ on $\C^n$ is $G$-invariant if $a(g^{-1}z)=z$ for all $z\in \C^n$ and $g\in G$. We denote the set of all $G$-invariant bounded functions by $\homf$.

We say $S\in \bdd$ is $G$-invariant (or an intertwining operator) if $\TpiG(g)S=S$ for all $g\in G$. Let $\Hom$ denote the set of all $G$-invariant (intertwining) operators. Then $\Hom$ is a closed subalgebra of $\bdd$.

%========================================================================
%                 -CONVOLUTIONS-
%========================================================================

%========================================================================
%                 -CBU-
%========================================================================

\section{The Algebra of Bounded Uniformly Continuous Operators}\label{sec:cbu}

In this section, we go over Fulsche's correspondence theory and a quantum version Wiener's Tauberian theorem.

\subsection{Fulsche's correspondence theory}
The correspondence theory for $p$-Fock spaces was discussed in \cite{F19}. Here we briefly discuss correspondence theory focusing on $p=2$.
Let $\cbuC$ denote the $C^*$-subalgebra of $\bddf$ given by the set of all bounded uniformly continuous functions on $\C^n$. Recall that the set of all \textit{uniformly continuous operators} $\cbuop$ is given by
$$\cbuop= \{S\in\bdd \mid z\to \actop{z}S \text{ is continuous in } \|\cdot\| \}$$

Then $\cbuop$ forms a $C^*$-subalgebra of $\bdd$ and Fulsche gave the following characterizations of $\cbuop$ in \cite{F19}:
 \begin{align}\label{eq:Fulsche correspondence}
      \begin{split}
          \cbuop&= \overline{\text{span}\{\psi\ast S \mid \psi\in L^1(\C^n), S\in \bdd   \}}\\
      &= \{S\in \bdd \mid \psi_t \ast S \to S \text{ in norm} \}\\
      &=\overline{\{T_a \mid a\in \cbuC  \}}\\
      &=\overline{\bddf \ast \Phi}
      \end{split} 
  \end{align} 
  where  $\{\psi_t\}$ is an approximate identity for $L^1(\C^n)$. 
  
   Since the set of all Toeplitz operators with bounded symbols is contained in $\cbuop$, as an application of correspondence theory, the Toeplitz algebra $\toepalg$ coincides with $\cbuop$:
\begin{align}\label{eq:correspondence_Toeplitz}
      \cbuop=\toepalg.
  \end{align}

\subsection{Wiener's Tauberian theorem}
There are several equivalent ways to define regularity for $L^1$ functions, the most classical definition being that the Fourier transform of $\psi$ vanishes nowhere. The classical Wiener's Tauberian theorem says that a function $\psi\in L^1(\C^n)$ is regular if and only if the translates of $\psi$ are dense in $L^1(\C^n)$, which is also equivalent to having
$$L^1(\C^n)=\overline{\psi\ast L^1( \C^n) }.$$
In the papers \cite{W84, LS18, FG23}
the notion of regularity was extended to operators.
An operator $\Psi\in \tclass$ is said to be {\it regular} if translates of $\actop{z}\Psi$ are dense in $\tclass$. The following are two equivalent forms of regularity that we will use in our proofs below (see e.g.~\cite{LS18, FG23} for a proof). 
\begin{enumerate}[label=(\roman*)]
    \item $\Psi$ is a regular operator.
    \item $\Psi \ast \Psi$ is a regular function.
    \item $\tclass=\overline{\Psi \ast \tclass}$.
\end{enumerate}
Note that the Gaussian function $\phi$ is regular, as is the operator $\Phi=k_0\otimes k_0$.
It is not hard to see that a convolution of two regular functions or two regular operators yields a regular function. Similarly, convolving a regular operator with a regular function gives a regular operator.

For any regular function $\psi\in L^1(\C^n)$ we have
\begin{align}\label{eq:Wiener's theorem cbu}
    \cbuC =\overline{\psi\ast \bddf}=\overline{\psi\ast \cbuC}.
\end{align}
as a consequence of Wiener's theorem. The following quantum version of this fact is closely related to quantum forms of Wiener's theorem discussed in \cite{W84, LS18, FG23}. However, since we weren't able to find it explicitly stated in the literature, for completeness, we include its simple proof. 

\begin{proposition}\label{prop:Wiener's theorem}
Let $\psi\in L^1(\C^n)$ be a regular function and let $\Psi\in \tclass$ be a regualar operator. Then
\begin{enumerate}
    \item $\cbuC =\overline{\psi\ast \bddf}=\overline{\psi\ast \cbuC}$
    \item $\cbuop=\overline{\psi\ast \bdd}=\overline{\psi\ast \cbuop}$
    \item $\cbuC =\overline{\Psi\ast \bdd}=\overline{\Psi\ast \cbuop}$
    \item  $\cbuop=\overline{\Psi\ast \bddf}=\overline{\Psi\ast \cbuC}$.
\end{enumerate}
In particular,
 $$\cbuop=\overline{\psi \ast \toep}=\overline{\{ T_{\psi \ast a}\mid a\in \bddf\}}$$
 where $\toep$ denotes the set of all Toeplitz operators with bounded symbols:
 $$\toep=\{T_a\mid a\in \bddf\}.$$
\end{proposition}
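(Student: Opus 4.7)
The easy containments $\overline{\psi\ast\bddf}\subseteq\cbuC$, $\overline{\psi\ast\bdd}\subseteq\cbuop$, $\overline{\Psi\ast\bdd}\subseteq\cbuC$, and $\overline{\Psi\ast\bddf}\subseteq\cbuop$ are all immediate from the norm bounds and uniform-continuity claims collected in Section 2.3, namely $\|\psi\ast S\|\le\|\psi\|_1\|S\|$, $\Psi\ast\bdd\subseteq\cbuC$, and $a\ast\Psi\in\cbuop$ with $\|a\ast\Psi\|\le\|a\|_\infty\|\Psi\|_1$. Since part (1) is precisely the Wiener identity (3.2) stated just above the proposition, the real work is in parts (2), (3), (4) and the ``in particular'' claim.

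I will prove all three density statements by the same two-step mechanism. Given $\epsilon>0$ and $S\in\cbuop$ (or $a\in\cbuC$), first choose $t$ so that $\|\psi_t\ast S-S\|<\epsilon/2$ for an approximate identity $\{\psi_t\}\subset L^1(\C^n)$. Second, by the classical Wiener Tauberian theorem applied to a regular $L^1$ function $\eta$ (take $\eta=\psi$ for part (2), and $\eta=\Psi\ast\Psi$ for parts (3) and (4), the latter being regular by the equivalences recorded just above the proposition), decompose $\psi_t=\eta\ast\phi+r$ in $L^1$ with $\|r\|_1\|S\|<\epsilon/2$. Then $\psi_t\ast S$ is within $\epsilon/2$ of $(\eta\ast\phi)\ast S$, and the whole task reduces to rewriting $(\eta\ast\phi)\ast S$ in the required form by associativity.

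For (2) this is routine: $L^1$-convolution on $\bdd$ is associative, so $(\psi\ast\phi)\ast S=\psi\ast(\phi\ast S)$ with $\phi\ast S\in\cbuop$. For (3), starting with $a\in\cbuC$, $L^1$-associativity gives $((\Psi\ast\Psi)\ast\phi)\ast a=(\phi\ast a)\ast(\Psi\ast\Psi)$, and the preliminary identity $b\ast(A\ast B)=(b\ast A)\ast B$ for $b\in\bddf$ and $A,B\in\tclass$ rewrites this as $((\phi\ast a)\ast\Psi)\ast\Psi$, which is $\Psi\ast S'$ with $S'=(\phi\ast a)\ast\Psi\in\cbuop$. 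For (4), starting with $S\in\cbuop$, I first use $L^1$-associativity to obtain $(\Psi\ast\Psi)\ast(\phi\ast S)$, then apply the operator associativity $(A\ast B)\ast C=A\ast(B\ast C)$ (valid since the middle factor $\Psi$ is in $\tclass$) to convert this into $\Psi\ast(\Psi\ast(\phi\ast S))=(\Psi\ast(\phi\ast S))\ast\Psi$; the inner operator convolution $\Psi\ast(\phi\ast S)$ lies in $\cbuC$ since $\Psi\in\tclass$, exhibiting the answer as an element of $\cbuC\ast\Psi=\Psi\ast\cbuC\subseteq\Psi\ast\bddf$.

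The step demanding the most attention is this associativity bookkeeping, because each convolution can be function--function, function--operator, or operator--operator depending on the arguments, and only the specific identities from Section 2.3 apply in each case. Once the types are aligned the remainder estimate closes the argument uniformly. Finally, the ``in particular'' claim follows from part (1) together with Fulsche's identity $\cbuop=\overline{\{T_a:a\in\cbuC\}}$ from (3.1): for $a\in\cbuC$, write $a=\lim\psi\ast b_k$ in sup-norm with $b_k\in\bddf$; then $\|T_a-T_{\psi\ast b_k}\|\le\|a-\psi\ast b_k\|_\infty\to 0$ and $T_{\psi\ast b_k}=\psi\ast T_{b_k}\in\psi\ast\toep$, yielding $\cbuop=\overline{\psi\ast\toep}=\overline{\{T_{\psi\ast a}:a\in\bddf\}}$.
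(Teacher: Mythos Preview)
Your proof is correct and rests on the same core ideas as the paper: the classical Wiener Tauberian theorem together with the regularity of $\Psi\ast\Psi$ and the associativity/commutativity rules for the mixed convolutions. For part (2) your argument is essentially identical to the paper's. For parts (3) and (4) the paper packages things a bit more economically: instead of redoing the $\epsilon$-approximation with $\eta=\Psi\ast\Psi$, it simply applies the already-established (1) and (2) to the regular function $\Psi\ast\Psi$ and then runs an inclusion chain, e.g.\ $\overline{\Psi\ast\cbuCGinvar}\subset\cbuop=\overline{(\Psi\ast\Psi)\ast\cbuop}\subset\overline{\Psi\ast(\Psi\ast\cbuop)}\subset\overline{\Psi\ast\cbuC}$ (and similarly for (3)). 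This avoids re-tracking types through the associativity bookkeeping you describe, though your explicit rewriting is perfectly valid. For the ``in particular'' statement the paper takes a more direct route: since $\psi\ast\Phi$ is a regular trace-class operator, part (4) applied with $\Psi=\psi\ast\Phi$ gives $\cbuop=\overline{(\psi\ast\Phi)\ast\bddf}=\overline{\psi\ast\toep}$ in one stroke, whereas you go via (1) and Fulsche's identity~(3.1); both work.
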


\begin{proof}
    Proofs of (1) and (2) are similar. We prove (2).
    Since $\psi\ast S$ is a uniformly continuous operator for any $S\in \bdd$, we have $$ \psi\ast \cbuop \subset \psi\ast \bdd\subset \cbuop.$$
    To prove $\cbuop\subset \overline{\psi\ast \cbuop} $, let $S\in \cbuop$ and let $\epsilon>0$. Let $\{f_t\}$ be an approximate identity for $L^1(\C^n)$. Then $f_t\ast S\to S$ and hence there exists $t_0$ s.t. 
    $$\|S-\psi_{t_0}\ast S\|<\frac{\epsilon}{2}.$$
    By Wiener's Tauberian theorem, there exists $b\in L^1(G)$, s.t. 
    $$\|\psi_{t_0}-\psi\ast b\|_1<\frac{\epsilon}{2(\|S\|+1)}.$$
    Then $b\ast S\in \cbuop$ and
    \begin{align*}
        \|S-\psi\ast (b\ast S)\| &\leq \|S-\psi_{t_0} \ast  S\|+\|\psi_{t_0} \ast  S-(\psi\ast b)\ast S\|\\
        &\leq \frac{\epsilon}{2}+\|\psi_{t_0}-\psi\ast b\|_1\|S\|\\
        &<\epsilon
    \end{align*}
    as required.\\
    Proof of (3): Note that $\Psi\ast \Psi$ is a regular function. Then by properties of convolutions and statement (2), we have
    \begin{align*}
        \overline{\Psi\ast \cbuop} &\subset \overline{\Psi\ast \bdd}\\
        &\subset \cbuop\\
        &=\overline{(\Psi\ast \Psi)\ast \cbuop}\hspace{1cm} \text{ (by (2))}\\
        &\subset \overline{\Psi\ast (\Psi\ast \cbuop)}\\
        &\subset \overline{\Psi\ast \cbuop}
    \end{align*}
    proving statement (3). The proof of (4) is similar.\\
    To prove that last statement, recall that for any $a\in \bddf$ we have $T_a=\Phi\ast a$, where $\Phi=k_0\otimes k_0$. Since $\psi$ is a regular function and $\Phi$ is a regular operator we have that $\psi\ast \Phi$ is also a regular operator. Therefore, using (2), we obtain
    \[\cbuop=\overline{(\psi\ast\Phi)\ast \bddf}=\overline{\psi\ast\toep}.\]
\end{proof}

Since the Gaussian $\varphi$ is regular, we have
\begin{align}
    \begin{split}
        \cbuop&= \overline{\varphi\ast \bdd}\\
    &=\overline{\{T_{B(S)} \mid S\in\bdd\}}.
    \end{split}
\end{align}
 Also, we get the following corollary which was already proved in \cite{F19}.

\begin{corollary}  
        The following equality holds:
        $$\cbuop=\overline{\{T_a \mid a\in \cbuC  \}}.$$   
\end{corollary}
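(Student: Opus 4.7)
The plan is to deduce this corollary as an immediate consequence of the last assertion of Proposition~\ref{prop:Wiener's theorem}. Recall that proposition gives
\[\cbuop = \overline{\{T_{\psi\ast a} \mid a\in \bddf\}}\]
for any regular $\psi\in L^1(\C^n)$. In particular, we may take $\psi = \varphi$, the standard Gaussian, which is regular.

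For the nontrivial inclusion $\cbuop \subset \overline{\{T_a \mid a\in \cbuC\}}$, I would argue as follows. For any $a\in \bddf$ and any $\psi\in L^1(\C^n)$, the function $\psi \ast a$ is bounded and uniformly continuous on $\C^n$ (this was already noted in Section~\ref{sec:convolutions}, with $\|\psi\ast a\|_\infty \leq \|\psi\|_1\|a\|_\infty$). Hence
\[\{T_{\psi\ast a} \mid a\in \bddf\} \subset \{T_b \mid b \in \cbuC\},\]
and passing to norm closures together with the identity from Proposition~\ref{prop:Wiener's theorem} yields $\cbuop \subset \overline{\{T_b \mid b\in \cbuC\}}$.

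The reverse inclusion is even easier: every Toeplitz operator with bounded symbol lies in the Toeplitz algebra $\toepalg$, which by the correspondence theorem \eqref{eq:correspondence_Toeplitz} coincides with $\cbuop$. Since $\cbuop$ is norm closed, $\overline{\{T_b \mid b\in \cbuC\}} \subset \cbuop$. Combining the two inclusions gives the claimed equality. There is no real obstacle here; the corollary is essentially a relabeling of the "in particular" clause of Proposition~\ref{prop:Wiener's theorem} once one observes that $L^1 \ast L^\infty \subset \cbuC$.
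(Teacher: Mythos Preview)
Your proposal is correct and follows the same route the paper intends: the corollary is stated immediately after Proposition~\ref{prop:Wiener's theorem} precisely because it drops out of the ``in particular'' clause once one notes $L^1\ast L^\infty\subset\cbuC$. One small remark: for the reverse inclusion you do not need the full correspondence theorem~\eqref{eq:correspondence_Toeplitz}; it suffices to recall (as in Section~2.4) that $T_a=a\ast\Phi\in\cbuop$ for every $a\in\bddf$, which avoids any appearance of circularity.
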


%========================================================================
%                 -G-invariant Toeplitz algebras (NEW)-
%========================================================================

\section{\texorpdfstring{$G$}{G}-invariant Toeplitz algebras}\label{sec:G-invariant}

In this section we characterize certain $G$-invariant Toeplitz algebras. Recall that $U_n\ltimes \C^n$ acts on the Fock space via the and the projective representation $\pi$ induced by the representation of $U_n$ and the Weyl representation of $\C^n$. Let $G=\Grota\ltimes \Gtrans$ ($\Grota<U_n$ and $\Gtrans<\C^n$) be a subgroup of $U_n\ltimes \C^n$ equipped with Haar measure $d\mu:=d\mu_G$. Let $\piG$ be the restriction of $\pi$ to $G$. The set of all $\piG$-intertwining operators, denoted $\Hom$ is a von Neuman subalgebra of $\bdd$. Let $\toepalgG$ denote the norm closed subalgebra of $\bdd$ generated by the set of all $G$-invariant Toeplitz operators. 

The density of Toeplitz operators with symbols in $\cbuC$ in $\toepalgG$ for the case $G<\C^n$ was discussed in \cite{F19}. His approach relied on the invariance of $\toepalgG$ under the translations $L_z$, which is not satisfied by radial Toeplitz algebras. We discuss the same for the more general case $G<U_n\ltimes \C^n$ which includes radial and quasi-radial Toeplitz algebras. Note that in the case $G<U_n$ averaging over the representation can be used to prove density theorems (see \cite{DOQ15}) for $G$-invariant Toeplitz operators. This is no longer possible for groups $G$ for which $L^1(G)$ doesn't contain constant functions. We take a unified approach to prove that $G$-invariant Toeplitz algebras are linearly generated by Toeplitz operators. And more importantly, we prove the following about the $G$-invariant Toeplitz algebra $\toepalgG$ in Theorem \ref{theo:density_subToeplitzalgebras}:
$$\toepalgG=\overline{\{T_{B(S)} \mid S\in \Hom\}}.$$

 Recall that the $G$-convolution of $\psi\in L^1(G)$ and $a\in \bddf$, is given by
$$(\Avg a)(z)=\int_G  a(g^{-1}\cdot z)\ \psi(g)\haar{g};\ \ \ z\in\C^n$$
and the $G$-convolution of $\psi\in L^1(G)$ and $S\in \bdd$, is given by
$$\Avg S=\int_G  \actop{g}S \ \psi(g)\haar{g}.$$
Then the map $S\mapsto \psi\ \ast_G S$ is continuous and $\|\psi \ast_G S\|\leq \|\psi\|_1 \|S\| .$
However, we also have the continuity of convolutions in the strong operator topology of $\bdd$ and this plays a part in the proofs that follow.

%*************************************************************************************
\begin{proposition}\label{prop:SOTconvergence}
    Let $S_k,S\in\bdd$ s.t. $S_k\to S$ in strong operator topology and let $\psi\in L^1(G)$. Then  $$\Avg S_k\to \Avg  S$$ in strong operator topology.
\end{proposition}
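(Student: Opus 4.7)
Fix $f \in \cF^2$; the goal is to show that $\|(\psi\ast_G S_k)f - (\psi\ast_G S)f\| \to 0$. The overall strategy is to reinterpret the defining (weak) integral $\psi \ast_G T = \int_G L_g T\,\psi(g)\,d\mu(g)$, when applied to $f$, as a Bochner integral of $\cF^2$-valued functions, and then invoke Lebesgue dominated convergence.

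First, I would verify the pointwise convergence of the integrands. For each fixed $g \in G$ we have $L_g S_k f - L_g S f = \pi_G(g)(S_k - S)\pi_G(g)^* f$, so because $\pi_G(g)$ is unitary,
\begin{equation*}
\|L_g S_k f - L_g S f\| = \|(S_k - S)\pi_G(g)^* f\|,
\end{equation*}
which tends to $0$ by the SOT convergence $S_k \to S$ applied to the vector $\pi_G(g)^*f$. Next, by the Banach--Steinhaus theorem (applied to the SOT-convergent net $\{S_k\}$), the bound $M := \sup_k \|S_k\|$ is finite, so the integrand is dominated uniformly in $k$ by $2M\|f\|\,|\psi(g)|$, which belongs to $L^1(G)$.

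The second step is to justify the Bochner-integral interpretation of the convolution on a vector. For each $k$ the map $g \mapsto L_g S_k f = \pi_G(g) S_k \pi_G(g)^*f$ is norm-continuous in $g$: this follows by writing
\begin{equation*}
\|\pi_G(g_1)S_k\pi_G(g_1)^*f - \pi_G(g_2)S_k\pi_G(g_2)^*f\| \leq M\|\pi_G(g_1)^*f - \pi_G(g_2)^*f\| + \|(\pi_G(g_1) - \pi_G(g_2))S_k\pi_G(g_2)^*f\|
\end{equation*}
and using the strong continuity of $\pi_G$. Combined with the dominating integrable function $M\|f\|\,|\psi(g)|$, this makes $g \mapsto L_g S_k f\,\psi(g)$ Bochner integrable, and the resulting Bochner integral coincides with $(\psi\ast_G S_k)f$ by uniqueness of the weakly defined vector.

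Finally, by dominated convergence applied to the scalar functions $g \mapsto \|L_g S_k f - L_g S f\|\,|\psi(g)|$, together with the standard Bochner inequality,
\begin{equation*}
\|(\psi\ast_G S_k)f - (\psi\ast_G S)f\| \leq \int_G \|L_g S_k f - L_g S f\|\,|\psi(g)|\,d\mu(g) \longrightarrow 0,
\end{equation*}
which yields the claim. The only genuine subtlety is the passage from the weak integral defining $\psi\ast_G T$ to the Bochner integral of the vector-valued function $g \mapsto L_g T f$; once the integrand is shown to be norm-continuous and dominated, the two notions agree and the dominated convergence argument applies directly.
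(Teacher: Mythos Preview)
Your proof is correct and is in fact cleaner than the paper's own argument. Both proofs use the uniform bound $\sup_k\|S_k\|<\infty$ from Banach--Steinhaus together with dominated convergence, but they organize the passage to the limit differently. The paper expands the squared norm $\|(\psi\ast_G S_k)f-(\psi\ast_G S)f\|^2$ via the inner product, writes each term as a \emph{double} integral over $G\times G$ involving $\langle L_g S_k f, L_h S_k f\rangle$, and applies dominated convergence to these scalar integrands; this avoids any Bochner-integral formalism at the price of a longer computation. You instead identify $(\psi\ast_G T)f$ with the Bochner integral of $g\mapsto (L_g T)f\,\psi(g)$ and apply dominated convergence once to the norm of the integrand. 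The extra ingredient your route needs is the strong continuity of $\pi_G$ (to get norm continuity of $g\mapsto L_g S_k f$ and hence Bochner integrability), which you correctly isolate and justify; the paper's approach only needs weak continuity of $g\mapsto L_g S$, which is slightly less. In exchange, your argument is shorter, avoids the double integral, and makes the mechanism (vector-valued DCT) transparent.
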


\begin{proof}
Since $S_k\to S$ in SOT, there exists $C>0$ s.t. $\|S_k\|\leq C$ by uniform boundedness principle.
    Note that for $f\in \F$,
    \begin{align*}
        \|(\psi &\ast S_k)f-(\psi \ast S)f\|^2= \ip{(\psi \ast S_k)f-(\psi \ast S)f}{(\psi \ast S_k)f-(\psi \ast S)f}\\
        &=\|(\psi \ast S_k)f\|^2-\ip{(\psi \ast S_k)f}{(\psi \ast S)f}-\ip{(\psi \ast S)f}{(\psi \ast S_k)f}+\|(\psi \ast S)f\|^2\\
    \end{align*}
    Also, for $f\in\F$,
    \begin{align*}
        \|(\psi \ast S_k)f\|^2&=\ip{(\psi \ast S_k)f}{(\psi \ast S_k)f}\\
        &=\int \ip{L_zS_kf}{(\psi \ast S_k)f} \psi (z) d\lambda(z)\\
        &=\int \overline{\ip{(\psi \ast S_k)f}{L_zS_kf}} \psi (z) d\lambda(z)\\
        &= \int \int \ip{L_zS_kf}{L_wS_kf} \overline{\psi (w)} \psi (z) d\lambda (w) d\lambda(z).\\
    \end{align*}
Then since we have 
\begin{align*}
    |\ip{L_zS_kf}{L_wS_kf}&-\ip{L_zSf}{L_wSf}|\\
    &\leq |\ip{L_z(S_k-S)f}{L_wS_kf}|+|\ip{L_zSf}{L_w(S_k-S)f}|\\
    &\leq \|L_z(S_k-S)f\|\|L_wS_kf\|+\|L_zSf\|\|L_w(S_k-S)f\|\\
    &\leq \|(S_k-S)\pi(z)^*f\|C\|f\|+ \|S\|\|f\|\|(S_k-S)\pi(w)^*f\|,
\end{align*}
it follows that
$$\lim_{k} \ \ip{L_zS_kf}{L_wS_kf} = \ip{L_zSf}{L_wSf}$$
by the strong convergence of $S_k\to S$.
And also,
$$|\ip{L_zS_kf}{L_wS_kf}|\leq C^2\|f\|^2.$$
Therefor by Lebesgue dominated convergence theorem $$\lim_{k} \|\psi\ast S_kf\|=\|\psi\ast Sf\|.$$

By a similar application of the dominated convergence theorem and the fact that $S_k\to S$ in weak operator topology, we get that $$\lim_k \ip{(g\ast S_k)f}{(g\ast S)f}=\ip{(\psi\ast S)f}{(\psi\ast S)f}=\|(\psi\ast S)f\|^2.$$

Hence $$\lim_k \|(\psi\ast S_k)f-(\psi\ast S)f\|=0.$$   
\end{proof}
%*************************************************************************************

Our main theorem is the following $G$-invariant version of the Wiener's Tauberian theorem:

 \begin{theorem}\label{theo:density_subToeplitzalgebras}
     Let $G$ be a subgroup of $\U_n\ltimes \C^n$. Denote the set of all $G$-invariant bounded uniformly continuous functions by $\cbuCGinvar$ and let $$\cbuopGinvar=\cbuop\cap \Hom.$$
     Suppose $\psi\in L^1(\C^n)$ is a radial regular function and let $\Psi$ be a regular radial operator. 
     Then 
     \begin{enumerate}
         \item $\cbuCGinvar=\overline{\psi\ast \cbuCGinvar}$
         \item $\cbuopGinvar=\overline{\psi\ast \cbuopGinvar}$
         \item $\cbuCGinvar=\overline{\Psi\ast \cbuopGinvar}$
         \item  $\cbuopGinvar=\overline{\Psi\ast \cbuCGinvar}$.
    \end{enumerate}
    In particular, we have,
    $$\toepalgG=\cbuopGinvar=\overline{\{T_a\mid a\in \cbuCGinvar\}}.$$
 \end{theorem}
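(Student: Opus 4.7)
The plan is to thread $G$-invariance through the Wiener-type density argument of Proposition~\ref{prop:Wiener's theorem}. The starting observation is that when the ``convolver'' is radial, convolution preserves $G$-invariance: a change of variables using the unitary invariance of Lebesgue measure shows that $\psi \ast a \in \cbuCGinvar$ for radial $\psi \in L^1(\C^n)$ and $a \in \cbuCGinvar$, and the three operator-valued analogues follow from the covariance $\tau_A W_z \tau_A^* = W_{Az}$, the $\tau_A$-invariance of a radial $\Psi$, and the Weyl commutation relations (whose cocycle phases cancel in conjugations $W_z(\cdot)W_z^*$). In each case the $G$-invariance of the second factor absorbs the $\tau_A$- and Weyl-translation actions, giving $\psi \ast S \in \cbuopGinvar$, $\Psi \ast S \in \cbuCGinvar$, and $\Psi \ast a \in \cbuopGinvar$. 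This establishes the ``$\supset$'' containments in (1)--(4).

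For the reverse containment in (1), given $a \in \cbuCGinvar$ and $\epsilon > 0$, I fix a \emph{radial} approximate identity $\{f_t\} \subset L^1(\C^n)$ (rescaled Gaussians, say), so that $\|f_{t_0} \ast a - a\|_\infty < \epsilon/2$ for some $t_0$. The crucial analytic input is a \emph{radial Wiener Tauberian theorem}: the radial $L^1$ functions form a closed, semisimple, commutative convolution subalgebra of $L^1(\C^n)$ whose Gelfand spectrum can be identified with $[0,\infty)$ via the radialized Fourier transform; since $\psi$ is radial and regular, its Gelfand transform vanishes nowhere, and consequently $\overline{\psi \ast L^1_{rad}(\C^n)} = L^1_{rad}(\C^n)$. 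Choosing a radial $b \in L^1$ with $\|f_{t_0} - \psi \ast b\|_1 < \epsilon/(2(\|a\|_\infty + 1))$, the first step forces $b \ast a \in \cbuCGinvar$, and the triangle inequality gives $\|a - \psi \ast (b \ast a)\|_\infty < \epsilon$, finishing (1). Statement (2) is proved by the same template, replacing $a$ by $S \in \cbuopGinvar$ and function--function convolutions by function--operator ones.

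Statements (3) and (4) follow by a bootstrap from (1) and (2). Since $\Psi$ is radial and regular, $\Psi \ast \Psi$ is a radial regular $L^1$ function (recall that the convolution of two regular operators is a regular function). Applying (2) with $\psi$ replaced by $\Psi \ast \Psi$,
\[
\cbuopGinvar = \overline{(\Psi \ast \Psi) \ast \cbuopGinvar} = \overline{\Psi \ast (\Psi \ast \cbuopGinvar)} \subset \overline{\Psi \ast \cbuCGinvar},
\]
where the middle equality is the associativity of the QHA convolutions and the final containment uses the forward inclusion $\Psi \ast \cbuopGinvar \subset \cbuCGinvar$ already established. This is (4); statement (3) is derived from (1) in exactly the same way.

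Finally, the ``in particular'' clause is obtained by specializing (4) to $\Psi = \Phi = k_0 \otimes k_0$, which is regular and is radial because $k_0 \equiv 1$ is $U_n$-invariant; combined with $\Phi \ast a = T_a$, this yields $\cbuopGinvar = \overline{\{T_a \mid a \in \cbuCGinvar\}}$. The equality $\toepalgG = \cbuopGinvar$ then follows by sandwiching: on the one hand $\cbuopGinvar = \cbuop \cap \Hom$ is a closed subalgebra of $\bdd$ containing every $G$-invariant Toeplitz operator, so $\toepalgG \subset \cbuopGinvar$; on the other hand every $T_a$ with $a \in \cbuCGinvar$ is itself a $G$-invariant Toeplitz operator, so $\overline{\{T_a \mid a \in \cbuCGinvar\}} \subset \toepalgG$. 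The step I expect to be the main obstacle is the careful justification of the radial Wiener Tauberian theorem in the precise form needed: while essentially classical, the identification of the Gelfand spectrum of $L^1_{rad}(\C^n)$ and the verification that regularity of $\psi$ in $L^1(\C^n)$ really does produce a nowhere-vanishing Gelfand transform on this smaller algebra deserves to be spelled out.
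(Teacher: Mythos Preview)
Your proof follows the same architecture as the paper's: establish that radial convolvers preserve $G$-invariance, prove (1)--(2) by an approximate-identity-plus-Wiener argument, bootstrap to (3)--(4) via the regular radial function $\Psi\ast\Psi$, and specialize $\Psi=\Phi$ for the final clause. The one substantive difference is in how you produce the radial factor in step~(2). You invoke a Wiener--Tauberian theorem for the commutative subalgebra $L^1_{\mathrm{rad}}(\C^n)$ to approximate $f_{t_0}$ by $\psi\ast b$ with $b$ radial; the paper instead chooses the approximate identity $\{f_t\}$ so that each $\hat f_t$ has compact support, and then applies Wiener's \emph{division lemma} (Lemma~\ref{lem:division}) to get an exact factorization $f_t=\psi\ast h_t$, with $h_t$ automatically radial because $\hat h_t=\hat f_t/\hat\psi$ is radial. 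The paper's route is more elementary and sidesteps exactly the concern you flagged about the Gelfand theory of $L^1_{\mathrm{rad}}$; your route is perfectly valid but does require that extra justification. A minor stylistic difference: the paper packages the ``radial convolution preserves $G$-invariance'' facts into Lemmas~\ref{lem:convolution_and_gconvolution}--\ref{lem:hom_radial_convol}, proving the operator cases by reducing to Toeplitz operators and using SOT density (Proposition~\ref{prop:SOTconvergence}), whereas you sketch a direct Weyl-covariance computation.
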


 The proof of Theorem \ref{theo:density_subToeplitzalgebras} requires several lemmas.

%=====================================================================
 \subsection{\texorpdfstring{$G$}{G}-convolutions of Toeplitz operators}
 First, we show that convolutions preserve Toeplitz operators. 
The function $a\in \bddf$ is said to be $G$-invariant if $\avg a=a$.

 \begin{lemma}
     Let $a\in\bddf$. Then
     $$\actop{g}T_a=T_{\actf{g} a}.$$
 \end{lemma}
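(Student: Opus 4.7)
The plan is to express $T_a = P M_a|_{\F}$ with $P$ the Bergman projection and $M_a$ multiplication by $a$, extend $\piG(g)$ to a unitary on $\Ltwof$, and then establish the two commutation relations $\pi(g) P = P \pi(g)$ and $\pi(g) M_a \pi(g)^* = M_{\actf{g} a}$ on $\Ltwof$. Combining these ingredients gives
\begin{equation*}
\actop{g} T_a \;=\; \pi(g) P M_a \pi(g)^*\big|_{\F} \;=\; P \, \pi(g) M_a \pi(g)^*\big|_{\F} \;=\; P M_{\actf{g} a}\big|_{\F} \;=\; T_{\actf{g} a}.
\end{equation*}

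First I would extend $\pi(g)$ to all of $\Ltwof$ using the same formula $\pi(g) f(w) = j(g^{-1}, w) f(g^{-1}\cdot w)$. By the semidirect product structure it suffices to check that the $U_n$-factor $f \mapsto f(A^{-1}\cdot)$ is unitary on $\Ltwof$, which is immediate from $U_n$-invariance of $d\lambda$, and that the Weyl operator $W_z$ is unitary on $\Ltwof$, which follows from the pointwise identity $|k_z(w)|^2 e^{-\pi|w|^2} = e^{-\pi|w-z|^2}$ after the substitution $w \mapsto w+z$. The commutation $\pi(g) P = P \pi(g)$ is then a soft consequence of the fact that $\pi(g)$ and $\pi(g)^*$ both preserve $\F$: taking adjoints in $\Ltwof$ shows $\pi(g)$ also preserves $\F^{\perp}$, and invariance of both summands under $\pi(g)$ is equivalent to commutation with the orthogonal projection $P$.

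For the intertwining formula $\pi(g) M_a \pi(g)^* = M_{\actf{g} a}$, I would note that $\pi(g)$ has the general shape $U f(w) = \phi(w) f(g^{-1}\cdot w)$ with a Gaussian-type weight $\phi$, and $\pi(g)^*$ has the dual shape with $g^{-1}$ replaced by $g$ and a compatible weight forced by unitarity $U U^* = I$ to cancel $\phi$ pointwise. A direct calculation then yields
\begin{equation*}
\pi(g) M_a \pi(g)^* f(w) \;=\; a(g^{-1}\cdot w)\, f(w) \;=\; M_{\actf{g} a} f(w),
\end{equation*}
the scalar weights cancelling exactly as above. The main obstacle is mild bookkeeping around the projective phase cocycle $j$, but since $M_a$ commutes pointwise with all scalar-multiplication factors, the weights cancel cleanly and the proof reduces to verifying the two semidirect product model cases explicitly.
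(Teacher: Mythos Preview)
Your argument is correct. The paper's proof is a direct weak-form computation: it writes $\langle \actop{g}T_a f_1,f_2\rangle=\langle M_a\,\piG(g^{-1})f_1,\piG(g^{-1})f_2\rangle$, expands the integral using the explicit form of $j(g,\cdot)$, and performs the change of variables $w\mapsto g\cdot w$ to obtain $\langle T_{\actf{g}a}f_1,f_2\rangle$. Your version organizes the same ingredients more structurally by first extending $\piG(g)$ unitarily to $\Ltwof$, then separating the two mechanisms at play --- commutation of $\piG(g)$ with the Bergman projection $P$, and the conjugation identity $\piG(g)M_a\piG(g)^*=M_{\actf{g}a}$ --- before assembling them. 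The paper's computation is effectively your two steps done simultaneously inside one integral (the drop from $T_a$ to $M_a$ when paired against a holomorphic test function is exactly your commutation with $P$, and the change of variables is exactly your conjugation identity). Your presentation makes the reason the identity holds more transparent and generalizes verbatim to any weighted composition operator preserving the holomorphic subspace; the paper's has the virtue of being a single self-contained calculation with no need to discuss the extension to $\Ltwof$ or the projective cocycle.
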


 \begin{proof}
     Recall that for $g=(A,z)$, $j(g,w)=k_{A^{-1}z}(w)=e^{-\pi\overline{A^{-1}z}w-\frac{\pi|z|^2}{2}}$. For $f_1,f_2\in \F$, we have
     \begin{align*}
         \ip{\actop{g}T_af_1}{f_2}&=\ip{M_a\piG(g^{-1})f_1}{\piG(g^{-1})f_2}\\
         &=\int a(w)|j(g,w)|^2f_1(g\cdot w)\overline{f_2(g\cdot w)} d\lambda(w) \\
         &=\int a(w)f_1(g\cdot w)\overline{f_2(g\cdot w)} e^{-\pi|A^{-1}z+w|^2} dw \\
         &=\int a(w)f_1(Aw+z)\overline{f_2(Aw+z)} e^{-\pi|z+Aw|^2} dw\\
         &= \int a(A^{-1}(w-z))f_1(w)\overline{f_2(w)}\ d\lambda(w)\\
         &= \ip{T_{\actf{g} a}f_1}{f_2} 
     \end{align*}
     as required.
 \end{proof}

 \begin{lemma}\label{lem:Toeplitz_invariance}
     Let $a\in\bddf$. 
     $$(\Avg T_a)=T_{\avg a}.$$
 \end{lemma}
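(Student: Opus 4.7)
The plan is to test the operator identity against matrix coefficients and reduce to the preceding lemma by Fubini's theorem. Concretely, I would fix $f_1, f_2 \in \F$ and compute
\[
\ip{(\avg T_a) f_1}{f_2} = \int_G \ip{\actop{g} T_a f_1}{f_2}\, \psi(g)\, d\mu(g),
\]
which is just the weak definition of $\avg T_a$. Applying the previous lemma ($\actop{g} T_a = T_{\actf{g} a}$) replaces the inner product by $\ip{T_{\actf{g}a} f_1}{f_2} = \int_{\C^n} a(g^{-1}\cdot w)\, f_1(w)\, \overline{f_2(w)}\, d\lambda(w)$, using that $T_{\actf{g} a} f_1 = P(\actf{g}a \cdot f_1)$ and $f_2 \in \F$.

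Next I would swap the $G$- and $\C^n$-integrals. The combined integrand is dominated by $\|a\|_\infty |\psi(g)||f_1(w)||f_2(w)|$, which lies in $L^1(G \times \C^n, d\mu \otimes d\lambda)$ since $\psi \in L^1(G)$ and, by Cauchy--Schwarz, $|f_1 \overline{f_2}| \in L^1(\C^n, d\lambda)$. Fubini then yields
\[
\ip{(\avg T_a) f_1}{f_2} = \int_{\C^n} \Bigl(\int_G a(g^{-1}\cdot w)\, \psi(g)\, d\mu(g)\Bigr) f_1(w)\, \overline{f_2(w)}\, d\lambda(w),
\]
and the inner integral is precisely $(\avg a)(w)$ by the definition of the $G$-convolution on functions. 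Since $\avg a \in \bddf$ with $\|\avg a\|_\infty \le \|\psi\|_1 \|a\|_\infty$, the right-hand side equals $\ip{T_{\avg a} f_1}{f_2}$.

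Since $f_1, f_2 \in \F$ were arbitrary, the weak identity $\avg T_a = T_{\avg a}$ follows. The only non-routine step is the Fubini application, but it is immediate from the $L^\infty$-bound on $a$ and Cauchy--Schwarz; no honest obstacle arises. The preceding lemma does essentially all the work.
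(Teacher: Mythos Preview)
Your argument is correct and matches the paper's proof essentially line for line: fix $f_1,f_2$, apply the preceding lemma to rewrite $\actop{g}T_a$ as $T_{\actf{g}a}$, expand the inner product as an integral over $\C^n$, swap the $G$- and $\C^n$-integrals, and identify the result as $\ip{T_{\avg a}f_1}{f_2}$. The only difference is that you justify the Fubini step explicitly via the $L^\infty$-bound and Cauchy--Schwarz, whereas the paper leaves this implicit.
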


 \begin{proof}
     Notice that for $f_1,f_2\in \F$,
     \begin{align*}
         \ip{(\Avg T_a)f_1}{f_2}&=\int_G \ip{\piG(g)T_a\piG(g)^*f_1}{f_2} \psi(g)\haar{g}\\
         &=\int_G \ip{T_{g\cdot a}f_1}{f_2} \psi(g)\haar{g}\\
         &=\int_G \int_{\C^n} a(g^{-1}\cdot w)f_1(w)\overline{f_2(w)} d\lambda(w) \ \psi(g)\haar{g}\\
         &=\int_{\C^n} \int_G  a(g^{-1}\cdot w) \psi(g)\haar{g}\ f_1(w)\overline{f_2(w)}d\lambda(w)\\
         &=\ip{T_{\avg a}f_1}{f_2}.
     \end{align*}
     as required.
 \end{proof}

 \subsection{\texorpdfstring{$G$}{G}-convolutions vs usual convolutions}
 The proof of Theorem \ref{theo:density_subToeplitzalgebras} has elements similar to classical radialization. However, due to the possible non-compactness of $G$, averaging over the representation cannot be applied. A key ingredient of our proof is
the commutativity of $G$-convolutions with usual convolutions of radial $L^1$ functions (or trace class operators). 

 \begin{lemma}\label{lem:convolution_and_gconvolution}
     Let $a\in \bddf$, $S\in \bdd$, $\psi\in L^1(G)$ and let $\Psi\in \tclass$. Also let $h\in L^1(\C^n)$ be a radial function and $H\in \tclass$ be a radial operator. Then    
     \begin{enumerate}
         \item $\avg (h\ast a)=h\ast(\avg a)$
         \item $\Avg (h \ast S)=h\ast (\Avg S)$
         \item $\avg (H\ast a)=H\ast(\avg a)$
         \item $\Avg (H \ast S)=H\ast (\Avg S)$.
     \end{enumerate}
 \end{lemma}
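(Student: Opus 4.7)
The four identities share a common mechanism, and I would prove them in parallel rather than sequentially. The core idea is that for any $g=(A,z_g)\in G\subset U_n\ltimes\C^n$, conjugation by $\piG(g)$ intertwines the $\C^n$-translation $L_u$ with $L_{Au}$ (up to a scalar that cancels on conjugation), and analogously the point translation $\tau_u$ with $\tau_{Au}$. Since the auxiliary object being convolved, $h$ or $H$, is radial, i.e.\ invariant under the $U_n$-action, applying the change of variables $v=Au$ (which has Jacobian $1$ since $A$ is unitary) will absorb this twist. Combined with Fubini's theorem, this yields commutativity of the two convolutions.

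For part (1) I would compute directly: writing out $\avg(h\ast a)(w)$ as a double integral over $G$ and $\C^n$, using $g^{-1}\cdot(w-u)=g^{-1}\cdot w-A^{-1}u$, swapping the order of integration, substituting $v=A^{-1}u$, and finally invoking radiality $h(Av)=h(v)$ to recognize the result as $h\ast(\avg a)(w)$. For part (2), the analogous calculation at the operator level rests on the key identity
\[
\piG(g)\,L_u\,S\,\piG(g)^* \;=\; L_{Au}\bigl(\piG(g)S\piG(g)^*\bigr),
\]
which I would verify by noting $\piG(A,z_g)=c(g)\,\piG(A,0)\,W_{z_g}$ for a unimodular scalar $c(g)$, using $\piG(A,0)W_u\piG(A,0)^*=W_{Au}$ (a direct check from the definition of $\piG(A,0)$ and the normalized kernel), and observing that the phases cancel after conjugation of $S$. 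With this in hand, the same Fubini plus $v=Au$ plus radiality argument finishes part (2).

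Parts (3) and (4) run in the same fashion once the corresponding operator-translation identity is available: for a radial $H\in\tclass$ and any $g=(A,z_g)\in G$, one has $\piG(g)(L_u H)\piG(g)^*=L_{Au}H$, using additionally that $\piG(A,0)H\piG(A,0)^*=H$ by radiality of $H$. After this, the structure of the proof is identical. For the operator-function convolution $H\ast a$, which is defined by duality, I would either pair both sides against an arbitrary $A\in\tclass$ and reduce to an identity for Toeplitz operators (where the explicit integral form applies), or equivalently note that both sides are uniformly continuous functions agreeing weakly, hence pointwise.

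The main obstacle I anticipate is bookkeeping rather than conceptual: verifying carefully that the cocycle/phase factor $c(g)$ appearing in the projective representation $\pi$ really does cancel in every $W\,(\cdot)\,W^*$-type conjugation, and that the integrals (especially the weak/dual ones defining $H\ast S$ and $H\ast a$) support the Fubini interchange. Once the intertwining relation $\piG(g)W_u\piG(g)^*\propto W_{Au}$ is pinned down with its phase, the remainder is a clean change of variables powered entirely by the radiality assumption on $h$ or $H$.
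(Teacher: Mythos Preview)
Your approach is essentially correct and genuinely different from the paper's, but one formula needs fixing. For radial $H$ and $g=(A,z_g)\in G$ you claim $\piG(g)(L_uH)\piG(g)^*=L_{Au}H$; this is false whenever $z_g\neq 0$. Writing $\piG(g)=c\,W_{z_g}\tau_A$ (with $|c|=1$) and using $\tau_AH\tau_A^*=H$, one actually gets
\[
\piG(g)(L_uH)\piG(g)^* \;=\; L_{z_g}\bigl(L_{Au}H\bigr)\;=\;L_{g\cdot u}H,
\]
so the translation component of $g$ survives. The repair is painless: in the ensuing integral the change of variables becomes $v=g\cdot u=Au+z_g$ (Jacobian still $1$), and you recover exactly $a(g^{-1}\cdot v)=(\tau_g a)(v)$ on the function side, or the analogous expression on the operator side. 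With this correction your direct argument goes through for all four parts. (A small aside: $H\ast a$ is an operator, not a function, so your remark about ``uniformly continuous functions agreeing weakly'' is a type mismatch; the duality pairing you also mention is the right way to handle it.)

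By contrast, the paper does not compute the operator-level intertwining at all. After proving (1) exactly as you do, it deduces (2) and (3) first on Toeplitz operators, where everything collapses to (1) via $\psi\ast_G T_b=T_{\psi\ast_G b}$ and $h\ast T_b=T_{h\ast b}$, and then extends to arbitrary $S\in\bdd$ by invoking the SOT density of Toeplitz operators together with SOT continuity of $G$-convolutions (Proposition~\ref{prop:SOTconvergence}). For (4) it takes a further indirect step: convolve both sides with the regular radial operator $\Phi$, apply (3), and use injectivity of $S\mapsto\Phi\ast S$. Your route avoids these density and regularity detours entirely, at the price of the cocycle bookkeeping you already anticipated; the paper's route trades that bookkeeping for a layer of approximation machinery already developed elsewhere in the article.
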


 \begin{proof}
     Notice  that for $g=(A_g,z_g)$
     \begin{align*}
         \actf{g}(h\ast a) (z)
         &= (a\ast h) (A_g^{-1}(z-z_g))  \\
         &= \int_{\C^n} a(w) h(A_g^{-1}(z-z_g)-w)\ dw\ \\
         &= \int_{\C^n} a(w) h(A_g(A_g^{-1}(z-z_g)-w))\ dw  \\
         &\hspace{5cm} (\text{as $h$ is radial})\\
         &=\int_{\C^n} a(w) h(z-z_g-A_gw)\ dw\\
         &=\int_{\C^n} a(A_g^{-1}(w-z_g)) h(z-w)\ dw\\
         &=\int_{\C^n} a(g^{-1}\cdot w) h(z-w)\ dw
     \end{align*}
     by several changes of variables. Hence by Fubini's theorem and commutativity of usual convolutions,
     \begin{align*}
         (\avg (h\ast a))(z)&=\int_G \actf{g}(h\ast a) (z) \psi (g) \haar{g}\\
         &= \int_G \int_{\C^n} a(g^{-1}\cdot w) h(z-w)\ dw\ \psi (g) \haar{g}\\
         &= \int_{\C^n} (\avg a)(w) h(z-w)\ dw\\
         &= ((\avg a) \ast h(z)\\
         &= (h \ast (\avg a))(z).
     \end{align*}
     
     To prove the statement (2) and (3), notice that for a Toeplitz operator $T_a$, we have that
     \begin{align*}
        \Avg (h\ast T_a)=\Avg T_{h\ast a}=T_{\avg(h\ast a)}=T_{h\ast(\avg a)}=h\ast (\Avg T_{a}).
    \end{align*}
    and 
    \begin{align*}
        \Avg (T_h\ast a)=\Avg T_{h\ast a}=T_{\avg(h\ast a)}=T_{h\ast(\avg a)}=T_h\ast (\Avg a).
    \end{align*}
    by what was proved above and by Lemma \ref{lem:Toeplitz_invariance}. Hence the result holds for $S\in \bdd$ by the density of Toeplitz operators in $\bdd$ and the continuity of convolutions in the strong operator topology (Lemma \ref{prop:SOTconvergence}).
    For the proof of (4), we use (3). Note that by commutativity of usual convolutions and by applying (3) to the radial operator $\Phi$, we have 
    \begin{align*}
        (\psi \Gast (H\ast T_a))\ast \Phi &= \psi \Gast ((H\ast T_a) \ast \Phi)\\
        &= \psi \Gast (H \ast (a\ast \varphi))\\
        &= H \ast (\psi\Gast (a\ast \varphi))\hspace{1cm} \text{(by (3))}\\
        &= H \ast (\psi\Gast (T_a \ast \Phi))\\
        &= H \ast \Phi \ast (\psi \Gast T_a) \hspace{1cm} \text{(by (3))}\\
        &= (H \ast  \Phi \ast (\psi \Gast T_a)) \ast \Phi.
    \end{align*}
    Therefore  $\Avg (H \ast S)=H\ast (\Avg S)$ by the regularity of $\Phi$.
 \end{proof}

%=======================================================================

\subsection{\texorpdfstring{$G$}{G}-invariance via convolutions}

 \begin{lemma}\label{lem:convolution_invariance}
     Let $a\in \bddf$ and $S\in\bdd$. Then 
     \begin{enumerate}
         \item $a\in\homf$ iff $\Avg a=(\int_G \psi \ d\mu)a$ for all $\psi\in L^1(G)$.
         \item $S\in\Hom$ iff $\Avg S=(\int_G \psi \ d\mu)S$ for all $\psi\in L^1(G)$.       
     \end{enumerate}
 \end{lemma}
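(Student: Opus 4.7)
The forward direction of both (1) and (2) is immediate from the definitions: if $a(g^{-1}\cdot z)=a(z)$ for all $z\in\C^n$ and $g\in G$, then $a(z)$ may be pulled out of the defining integral of $\psi\ast_G a$ as a constant, giving $\psi\ast_G a = (\int_G\psi\,d\mu)\,a$; and if $L_g S = S$ for every $g\in G$, then the weak integral of the constant operator-valued function $g\mapsto L_g S$ against $\psi$ is simply $S\int_G\psi\,d\mu$.

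For the non-trivial direction, I would avoid approximate identities entirely and instead use a single translation trick. Fix any $\psi\in L^1(G)$ with $c:=\int_G\psi\,d\mu\neq 0$ (for instance a normalized indicator of a compact neighborhood of the identity). For an arbitrary $g_0\in G$, define the left-translate $\tilde\psi(h):=\psi(g_0^{-1}h)$; by left-invariance of Haar measure on $G$ one has $\int_G\tilde\psi\,d\mu = c$. The key computation is the substitution $k=g_0^{-1}h$ in the defining integral of $\tilde\psi\ast_G a$, combined with $(g_0k)^{-1}\cdot z = k^{-1}\cdot(g_0^{-1}\cdot z)$, which yields
\[
\tilde\psi\ast_G a = \actf{g_0}(\psi\ast_G a), \qquad \tilde\psi\ast_G S = L_{g_0}(\psi\ast_G S).
\]
Applying the hypothesis to both $\psi$ and $\tilde\psi$ separately gives $c\,a = \tilde\psi\ast_G a = \actf{g_0}(\psi\ast_G a) = c\,\actf{g_0}a$, and dividing by $c$ produces $\actf{g_0}a = a$, i.e.\ the $G$-invariance of $a$. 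The operator identity is handled identically.

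The one subtlety worth flagging is in case (2): the conjugation action remains genuinely multiplicative, $L_g L_h = L_{gh}$, even though $\piG$ is only a \emph{projective} representation, because the cocycle scalar in $\piG(g)\piG(h)=c(g,h)\piG(gh)$ cancels against its complex conjugate when one forms $\piG(g)\piG(h)S\piG(h)^*\piG(g)^*$. With this multiplicativity in hand, $L_{g_0}$ may be pulled outside the weak integral defining $\tilde\psi\ast_G S$, and the operator computation parallels the function case verbatim. The main—and really only—obstacle is notational bookkeeping: one must keep track of which side $\psi$ is translated on, and it is essential to invoke only \emph{left}-invariance of $d\mu_G$, since subgroups $G<U_n\ltimes\C^n$ need not be unimodular.
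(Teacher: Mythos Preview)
Your argument is correct and takes a genuinely different route from the paper's. The paper proves the reverse implication of (2) by a duality argument: from the hypothesis one obtains
\[
\int_G \ip{(L_gS-S)f_1}{f_2}\,\psi(g)\,d\mu(g)=0 \quad\text{for all }\psi\in L^1(G),
\]
and then uses that a bounded \emph{continuous} function annihilating all of $L^1(G)$ must vanish identically; this implicitly relies on the WOT-continuity of $g\mapsto L_gS$. Your translation trick bypasses this: you only need the hypothesis for one fixed $\psi$ with $\int_G\psi\neq 0$ together with all of its left translates, and no continuity in $g$ enters anywhere. The cocycle-cancellation remark for the conjugation action is exactly the right observation to make the operator case go through. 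What the paper's argument buys is brevity and a more transparent ``if it pairs to zero against everything, it is zero'' structure; what your argument buys is a slightly weaker set of hypotheses (no regularity in $g$ needed) and a cleaner algebraic mechanism. Your caution about not assuming unimodularity is well placed---only left invariance of $d\mu_G$ is used in the substitution $h\mapsto g_0k$.
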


 \begin{proof}
     Proofs of (1) and (2) are similar. We prove only (2). Assume $S\in \Hom$. Then for $f_1,f_2\in\F$
     \begin{align*}
         \ip{(\Avg S)f_1}{f_2}&=\int_G \ip{\actop{g}Sf_1}{f_2} \psi(g)\haar{g}\\
         &= \int_G \ip{Sf_1}{f_2} \psi(g)\haar{g}\\
         &= \ip{Sf_1}{f_2} \int_G \psi(g)\haar{g}
     \end{align*}
     and hence $\Avg S=(\int_G \psi \ d\mu)S$.
     For the other implication assume $\Avg S=(\int_G \psi \ d\mu)S$ for all $\psi\in L^1(G)$. Let $f_1,f_2\in \F$. Then for all $\psi\in L^1(G)$
     \begin{align*}
         \int_G \ip{(\actop{g}S-S)f_1}{f_2} \psi(g)\haar{g}
         &=\ip{(\Avg S)f_1}{f_2}-\int_G \psi(g)\haar{g}\ip{Sf_1}{f_2}\\
         &=0.
     \end{align*}
     Then since $g\to\ip{(\actop{g}S-S)f_1}{f_2}$ is a continuous function in $\bddfG$, we have $\ip{(\actop{g}S-S)f_1}{f_2}=0$ for all $g\in G$. Therefore $\actop{g} S=S$ for all $g\in G$ and $S\in \Hom$.
 \end{proof}

 \begin{lemma}\label{lem:hom_radial_convol}
     Let $S\in \Hom$ and $a\in \bddf$.  $h\in L^1(\C^n)$ be a radial function and let $H\in \tclass$ be a radial operator. Then 
     \begin{enumerate}
         \item $h\ast S\in \Hom$
         \item $H \ast a \in \Hom$
         \item $H\ast S\in \homf$.
     \end{enumerate}
 \end{lemma}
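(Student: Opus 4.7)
The plan is to reduce all three claims to the same mechanism. Lemma \ref{lem:convolution_invariance} characterizes $G$-invariance of a function or operator $X$ as the identity $\psi \ast_G X = \bigl(\int_G \psi\, d\mu\bigr) X$ holding for every $\psi \in L^1(G)$, and Lemma \ref{lem:convolution_and_gconvolution} says that $\psi \ast_G$ commutes with convolution by any radial $L^1$-function or radial trace-class operator. Combining these two facts gives each statement immediately.

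For (1), fix an arbitrary $\psi \in L^1(G)$. By Lemma \ref{lem:convolution_and_gconvolution}(2) and the $G$-invariance of $S$ (via Lemma \ref{lem:convolution_invariance}(2)),
\[
\psi \ast_G (h \ast S) \;=\; h \ast (\psi \ast_G S) \;=\; h \ast \Bigl(\Bigl(\int_G \psi\, d\mu\Bigr) S\Bigr) \;=\; \Bigl(\int_G \psi\, d\mu\Bigr)(h \ast S).
\]
Since $\psi$ was arbitrary, the reverse direction of Lemma \ref{lem:convolution_invariance}(2) yields $h \ast S \in \Hom$.

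Statement (3) is completely parallel: using Lemma \ref{lem:convolution_and_gconvolution}(4) to slide $\psi \ast_G$ past the radial operator convolution by $H$, and then the $G$-invariance of $S$, gives $\psi \ast_G (H \ast S) = \bigl(\int_G \psi\, d\mu\bigr)(H \ast S)$; since $H \ast S$ is a bounded uniformly continuous function on $\C^n$, Lemma \ref{lem:convolution_invariance}(1) delivers $H \ast S \in \homf$. Statement (2) is the same computation using Lemma \ref{lem:convolution_and_gconvolution}(3) together with the invariance of the scalar input, finishing via Lemma \ref{lem:convolution_invariance}(2). The membership of the output in the appropriate ambient space ($\bdd$, $\cbuC$, or $\cbuop$) is immediate from the norm estimates recalled in Section \ref{sec:convolutions}.

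I do not anticipate a real obstacle: once Lemmas \ref{lem:convolution_and_gconvolution} and \ref{lem:convolution_invariance} are in hand, each of (1)--(3) is a short three-line computation. The only care needed is bookkeeping the various types of convolution (function--operator, operator--function, operator--operator) so as to invoke the correct clauses of the two preceding lemmas at each step.
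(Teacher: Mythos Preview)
Your proposal is correct and follows exactly the paper's approach: both proofs reduce each item to the commutation identity of Lemma~\ref{lem:convolution_and_gconvolution} followed by the $G$-invariance criterion of Lemma~\ref{lem:convolution_invariance}. Note that in (2) you (like the paper) tacitly use that $a$ is $G$-invariant, which is the intended hypothesis even though the statement only writes $a\in\bddf$.
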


 \begin{proof}
      Note that for all $\psi\in L^1(G)$,
      \begin{align*}
          \Avg (h\ast S)&= h\ast (\Avg S)\ \ \ \  \text{(by Lemma \ref{lem:convolution_and_gconvolution})}\\
          &= h\ast \Big(\Big(\int \psi \ d\mu \Big) S\Big) \ \ \ \ \text{(by Lemma \ref{lem:convolution_invariance})}\\
          &= \Big(\int \psi \ d\mu\Big)(h\ast S)
      \end{align*}
      proving (1).
      Similarly, for all $\psi\in L^1(G)$,
      \begin{align*}
          \Avg (H\ast a)= H\ast (\Avg a) = \Big(\int \psi \ d\mu\Big)(H\ast a)
      \end{align*}
      and 
      \begin{align*}
          \psi \Gast (H\ast S)= H\ast (\Avg S)= \Big(\int \psi \ d\mu\Big)(H\ast S)
      \end{align*}
      by lemmas \ref{lem:convolution_and_gconvolution}, \ref{lem:convolution_invariance} and by properties of convolutions discussed in Subection \ref{sec:convolutions}.
 \end{proof}

 \begin{lemma}\label{lem:hom_phi_convol}
     Let $S\in \bdd$. Then  $S\in \Hom$ iff $\conv{S}\in \Hom$.
 \end{lemma}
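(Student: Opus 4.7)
The forward direction is essentially free: the Gaussian $\varphi$ is radial and lies in $L^1(\C^n)$, so if $S\in\Hom$ then Lemma~\ref{lem:hom_radial_convol}(1) applied with $h=\varphi$ immediately gives $\conv{S}=\varphi\ast S\in\Hom$. The entire content of the lemma is therefore in the converse, where one must recover $G$-invariance of a possibly non-uniformly-continuous operator $S$ from $G$-invariance of its smoothing $\varphi\ast S$.

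For the converse I would use Lemma~\ref{lem:convolution_invariance}(2) as the criterion: it suffices to verify, for every $\psi\in L^1(G)$, that $\Avg S=\bigl(\int_G\psi\,d\mu\bigr)S$. Fix such $\psi$ and set $c=\int_G\psi\,d\mu$. Applying Lemma~\ref{lem:convolution_invariance}(2) to the $G$-invariant operator $\varphi\ast S$ yields $\psi\Gast(\varphi\ast S)=c(\varphi\ast S)$, while Lemma~\ref{lem:convolution_and_gconvolution}(2), which applies because $\varphi$ is radial, rewrites the left-hand side as $\varphi\ast(\Avg S)$. Subtracting, the problem reduces to the single identity
\[\varphi\ast(\Avg S-cS)=0.\]

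The main obstacle is therefore the cancellation statement that $\varphi\ast T=0$ forces $T=0$ for every $T\in\bdd$. My plan is to derive this from the regularity of $\varphi$ together with an approximate-identity argument: for any $\eta\in L^1(\C^n)$, associativity of convolution gives $(\eta\ast\varphi)\ast T=\eta\ast(\varphi\ast T)=0$, and by regularity of $\varphi$ the set $\{\eta\ast\varphi:\eta\in L^1(\C^n)\}$ is dense in $L^1(\C^n)$, so $\rho\ast T=0$ for every $\rho\in L^1(\C^n)$. Taking $\rho$ to be a standard approximate identity $\psi_t$ and using that $\psi_t\ast T\to T$ in the strong operator topology (which follows from the SOT-continuity of the translations $\actop{z}$ recorded in Section~\ref{sec:convolutions}), one concludes $T=0$. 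Applied to $T=\Avg S-cS$, this yields the required identity for every $\psi\in L^1(G)$, so that a final invocation of Lemma~\ref{lem:convolution_invariance}(2) delivers $S\in\Hom$.
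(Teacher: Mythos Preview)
Your proof is correct and follows essentially the same route as the paper: forward direction via Lemma~\ref{lem:hom_radial_convol}, converse via the commutation identity of Lemma~\ref{lem:convolution_and_gconvolution}(2) combined with the $G$-invariance criterion of Lemma~\ref{lem:convolution_invariance}(2), reducing to injectivity of $S\mapsto\varphi\ast S$. The only difference is that the paper simply invokes this injectivity as a known fact, whereas you supply an explicit proof via regularity of $\varphi$ and an SOT approximate-identity argument; your justification is valid (and is effectively what the paper later proves as Proposition~\ref{prop:L1approximate_identity}).
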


 \begin{proof}
     Due to Lemma \ref{lem:hom_radial_convol}, only need to show the converse. Assume $\conv{S}\in \Hom$. Then for all $\psi\in L^1(G)$,
     $$\varphi \ast (\Avg S)=\Avg (\varphi \ast S)=\Big(\int \psi \ d\mu\Big)(\varphi \ast S)= \varphi \ast \Big(\Big(\int \psi \ d\mu\Big) S\Big).$$
     Since $S\mapsto \conv{S}$ is injective,
     $$\Avg S= \Big(\int \psi \ d\mu\Big)S \ \ \forall \psi\in L^1(G)$$
     and hence $S\in \Hom$ by Lemma \ref{lem:convolution_invariance}.
 \end{proof}

 \subsection{The proof of Theorem \ref{theo:density_subToeplitzalgebras}}
 Now we present the proof of Theorem \ref{theo:density_subToeplitzalgebras}.

The following lemma plays an important role in the proof and can be found in \cite{RS00} (Lemma 1.4.2).
\begin{lemma}[Wiener's division lemma]\label{lem:division}
    Let $\psi,f\in L^1(\R^n)$ s.t. $\psi$ is regular and $\hat{f}$ has compact support. Then there exists a function $h\in L^1(\R^n)$ s.t. $f=\psi \ast h$.
\end{lemma}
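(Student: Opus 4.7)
The plan is to solve the convolution equation $f=\psi\ast h$ on the Fourier side, where it becomes $\hat f=\hat\psi\,\hat h$. Since $\psi\in L^1(\R^n)$ is regular, $\hat\psi$ is continuous and nowhere zero, so the pointwise quotient $\hat f/\hat\psi$ is well-defined; the entire task is to realize it as the Fourier transform of some $h\in L^1(\R^n)$, i.e.\ as an element of the Wiener algebra $A(\R^n):=\mathcal{F}(L^1(\R^n))$.

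To reduce to a compact problem, I would introduce a cutoff. Let $K:=\operatorname{supp}\hat f$, which is compact by hypothesis. Continuity of $\hat\psi$ and compactness of $K$ yield an open neighborhood $U\supset K$ with $|\hat\psi|\geq\epsilon$ on $\overline{U}$ for some $\epsilon>0$. Choose $\chi\in C_c^\infty(\R^n)$ with $\chi\equiv 1$ on a neighborhood of $K$ and $\operatorname{supp}\chi\subset U$. If I can produce $g\in L^1(\R^n)$ with $\hat g=\chi/\hat\psi$, then setting $h:=f\ast g\in L^1(\R^n)$ yields
\[\hat\psi\,\hat h=\hat\psi\,\hat f\,\hat g=\chi\,\hat f=\hat f,\]
using $\chi\equiv 1$ on $\operatorname{supp}\hat f$, and injectivity of the Fourier transform on $L^1(\R^n)$ delivers $f=\psi\ast h$.

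The essential difficulty is the construction of $g$, i.e.\ showing $\chi/\hat\psi\in A(\R^n)$. The obstacle is that $\hat\psi$ itself generally does not lie in $A(\R^n)$; however, for any bump $\chi_1\in C_c^\infty(\R^n)$ one does have $\chi_1\hat\psi=\mathcal{F}(\check\chi_1\ast\psi)\in A(\R^n)$. The plan is a local Wiener--Levy argument: for each $\xi_0\in\operatorname{supp}\chi$ pick a bump $\chi_{1,\xi_0}\in C_c^\infty$ equal to $1$ near $\xi_0$ and supported where $|\hat\psi|\geq\epsilon$; then apply the holomorphic functional calculus for $A(\R^n)$ to the map $z\mapsto 1/z$ (holomorphic near the nonzero value $\hat\psi(\xi_0)$) and to $\chi_{1,\xi_0}\hat\psi\in A(\R^n)$, obtaining $G_{\xi_0}\in A(\R^n)$ which equals $1/\hat\psi$ on some open neighborhood $V_{\xi_0}$ of $\xi_0$. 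Extract a finite subcover $\{V_{\xi_j}\}_{j=1}^N$ of the compact set $\operatorname{supp}\chi$, take a subordinate $C_c^\infty$ partition of unity $\{\eta_j\}$ with $\sum_{j}\eta_j\equiv 1$ on $\operatorname{supp}\chi$, and assemble
\[\frac{\chi}{\hat\psi}=\sum_{j=1}^N\chi\,\eta_j\,G_{\xi_j}\in A(\R^n),\]
since each summand is a product of a $C_c^\infty$ function (automatically in $A(\R^n)$) with an element of $A(\R^n)$. This finishes the proof modulo the local Wiener--Levy step, which is the standard holomorphic functional calculus in the regular semisimple Banach algebra $A(\R^n)$.
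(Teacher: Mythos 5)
The paper does not actually prove this lemma --- it is quoted verbatim from Reiter--Stegeman \cite{RS00} (Lemma 1.4.2) --- so there is no internal proof to compare against. Your outline is essentially the standard textbook argument: reduce to showing $\chi/\hat\psi\in A(\R^n)$ for a cutoff $\chi$ equal to $1$ on $\operatorname{supp}\hat f$, prove local invertibility of $\hat\psi$ in $A(\R^n)$ at each point, and patch with a partition of unity; the final reduction $h:=f\ast g$ and the appeal to injectivity of the Fourier transform on $L^1$ are correct. (One small misstatement: $\hat\psi$ \emph{does} lie in $A(\R^n)$, being $\mathcal{F}\psi$ with $\psi\in L^1$; the obstruction is that $1/\hat\psi$ does not, since $\hat\psi\to 0$ at infinity.)

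The genuine gap is in the local step as you describe it. You propose applying the holomorphic functional calculus for $z\mapsto 1/z$ to the element $\chi_{1,\xi_0}\hat\psi$. But the spectrum of $\chi_{1,\xi_0}\hat\psi$ in (the unitization of) $A(\R^n)$ is the closure of its range together with $0$, and $0$ genuinely belongs to the range because $\chi_{1,\xi_0}$ vanishes outside a compact set; so $1/z$ is not holomorphic on any neighborhood of the spectrum and the functional calculus does not apply. Nor can one substitute another germ $F$ with $F(0)=0$: the spectrum is compact and connected (continuous image of the one-point compactification of $\R^n$), so any $F$ holomorphic on a neighborhood of it that agrees with $1/z$ on the set $\hat\psi(V)$ (which has a limit point unless $\hat\psi$ is locally constant at $\xi_0$) would equal $1/z$ on the whole connected component containing $0$, which is impossible. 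The correct local inversion is a Neumann-series argument in the unitization $A(\R^n)^+$: with $c:=\hat\psi(\xi_0)\neq 0$, choose bumps $b_\delta\in A(\R^n)$ with $b_\delta\equiv 1$ on $B(\xi_0,\delta)$, $\operatorname{supp}b_\delta\subset B(\xi_0,2\delta)$, and $\|b_\delta\|_{A}\leq C$ uniformly in $\delta$; one checks (via the $L^1$ side, writing $b_\delta=\hat\beta_\delta$ with $\beta_\delta$ a modulated dilate of a fixed kernel, and dominated convergence) that $\|(\hat\psi-c)b_\delta\|_{A}\to 0$ as $\delta\to 0$. Hence $c\cdot 1+(\hat\psi-c)b_\delta$ is invertible in $A(\R^n)^+$ for small $\delta$, its inverse coincides with $1/\hat\psi$ on $B(\xi_0,\delta)$, and multiplying by a bump supported there yields the desired $G_{\xi_0}\in A(\R^n)$. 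With that replacement your compactness/partition-of-unity assembly and the final division go through.
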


\begin{proof}
    
    Proofs of (1) and (2) are similar. We provide a proof of (2). The inclusion $\overline{\psi \ast \cbuopGinvar} \subset \cbuopGinvar$ is immediate because for $S\in \Hom$, $\psi \ast S\in \Hom$ by Lemma \ref{lem:hom_phi_convol}. 
    
    To prove $\cbuopGinvar\subset \overline{\psi\ast\cbuopGinvar }$, let $S\in \cbuopGinvar$. Also, let $\{f_t\}$ be an approximate identity of radial functions s.t. $\hat{f_t}$ are compactly supported (such $f_t$ can be constructed by dilating the Fourier inverse of the bump function). Then since $S\in \cbuop$, 
    $$\lim_{t\to 0^+} f_t \ast S\to S.$$
    By Wiener's division lemma \ref{lem:division}, there is $h_t\in L^1(\C^n)$ s.t. $f_t=\psi \ast h_t$, i.e. 
    $$\hat{h}_t=\frac{\hat{f_t}}{\hat{\psi}}.$$
    Since $\frac{\hat{f_t}}{\hat{\psi}}$ is a radial function, $h_t$ is also a radial function. Therefore, we have
    $$\lim_{t\to 0^+} \psi\ast (h_t \ast S)\to S.$$
    Note that $h_t \ast S\in \cbuopGinvar$ by Lemma \ref{lem:hom_radial_convol} and hence $S\in  \overline{\psi \ast {\cbuopGinvar}}$. 
    
    Proof of (3): Note that if $S\in \cbuopGinvar$ then $\Psi\ast S\in \cbuCGinvar$  by Lemma \ref{lem:hom_radial_convol}. Also, $\Psi \ast \Psi$ is a regular function. Therefore, 
    \begin{align*}
        \overline{\Psi \ast \cbuopGinvar} &\subset \cbuCGinvar \hspace{2cm} \text{( by Lemma \ref{lem:hom_radial_convol})}\\
        &= \overline{(\Psi \ast \Psi) \ast \cbuCGinvar}  \ \ \ \text{ (by statement (1))}\\
        &= \overline{\Psi \ast (\Psi \ast \cbuCGinvar)}\\
        &\subset \overline{\Psi \ast \cbuopGinvar} \hspace{1.6cm} \text{( by Lemma \ref{lem:hom_radial_convol})}
    \end{align*}
    and hence 
    $$\overline{\Psi \ast \cbuopGinvar}=\cbuCGinvar.$$

    Proof  of (4): Similarly by  Lemma \ref{lem:hom_radial_convol} and statement (2), we have 
    \begin{align*}
        \overline{\Psi \ast \cbuCGinvar} &\subset \cbuopGinvar \hspace{2cm} \text{( by Lemma \ref{lem:hom_radial_convol})}\\
        &= \overline{(\Psi \ast \Psi) \ast \cbuopGinvar}  \ \ \ \text{ (by statement (2))}\\
        &= \overline{\Psi \ast (\Psi \ast \cbuopGinvar)}\\
        &\subset \overline{\Psi \ast \cbuCGinvar} \hspace{1.5cm} \text{( by Lemma \ref{lem:hom_radial_convol})}
    \end{align*}
    proving
    $$\overline{\Psi \ast \cbuCGinvar}=\cbuopGinvar.$$
     
    We obtain the last equality by taking $\Psi=\Phi=k_0\otimes k_0$ in (4). 
\end{proof}

%========================================================================
%                 -Quasi-radial Toeplitz algebra-
%========================================================================
 
 \section{Quasi-radial Toeplitz algebra}\label{sec:example-qrad}
 In this section, we investigate the density of Toeplitz operators in the quasi-radial Toeplitz algebra that arises from a subgroup of $U_n$, for both the Bergman space and the Fock space. The density of Toeplitz operators in the quasi-radial Toeplitz algebra is known \cite{DO22, EM16}. We present a new result: Toeplitz operators are also dense in the quasi-radial Toeplitz algebra over the Bergman space.  

 \subsection{The Bergman space}
  Consider the unit ball in $\C^n$ with the normalized volume measure $dV$. Then the Bergman space, denoted $\Berg$, is the set of all holomorphic functions on the unit ball $\B^n$ in $\C^n$ that are square-integrable w.r.t. $dV$. Here we use the notation $\bergfock$ to denote both the Bergman space and the Fock space.

\subsection{\texorpdfstring{$k$}{k}-Quasi-radial operators}
 Let $k\in \N$ and suppose $\vn=\vn(k) = (n_1,\dots,n_k)\in \N^k$ s.t. $n=n_1+\cdots +n_k$. Consider the group $G=G_{\vn}:=U_{n_1}\times\cdots\times U_{n_k}$ where $\rU_{n_i}$ are $n_i\times n_i$ unitary matrices. Note that $G$ can be understood as a subgroup of $U_n$: the elements of $G$ can be identified with block diagonal matrices in $U_n$ and hence $G<U_n\ltimes \C^n$.
 Let $\bddqradX$ denote the algebra of all $\piG$-intertwining operators (quasi-radial operators). As a special case (when $k=1$), we get the widely studied radial radial operators  (see \cite{GV02}).
 And let $\toepalgrad$ denote the closed subalgebra of $\bddX$ generated by the set of all quasi-radial Toeplitz operators $\toep^{\text{Q-rad}}$.
 For $S \in \bddX$, the well-known quasi-radialization of $S$, denoted by $\QRad{S}$, is the convolution of $S$ with the constant function $1$:
$$\QRad{S}= \intG{\actm{A}S\actm{A}^*},$$
where $d\mu$ is the normalized Haar measure on $G$. Then $\QRad{S}$ is a bounded operator with
$$\|\QRad{S}\|\leq \|S\|.$$
It is well known that $\QRad{S}$ is in $\bddqradX$ for any $S\in \bddX$ and $\QRad{S}=S$ iff $S\in \bddqradX$ (see \cite{GV14}). In other words, quasi-radialization is a continuous projection from $\bddX$ to the algebra of quasi-radial operators $\bddqradX$. For completeness, we provide a proof of this below.

\begin{lemma}
    The map $S\mapsto \QRad{S}$ is a continuous projection from $\bddX$ to the algebra of quasi-radial operators $\bergfock$.
\end{lemma}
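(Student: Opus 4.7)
The plan is to verify, in order, three things: (i) continuity (boundedness) of $S\mapsto \QRad{S}$, (ii) that the image is contained in $\bddqradX$, and (iii) that $\QRad{\cdot}$ fixes $\bddqradX$ pointwise. Properties (ii) and (iii) together force idempotence $\QRad{\QRad{S}}=\QRad{S}$, so these three items give exactly the statement. Continuity is immediate from the norm estimate $\|\QRad{S}\|\leq \|S\|$ recorded just before the lemma, which itself comes from applying the triangle inequality in the weak sense to the defining integral against the normalized Haar measure $\mu$ on the compact group $G=U_{n_1}\times\cdots\times U_{n_k}$.

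For (ii) I would fix $B\in G$ and check $\actm{B}\QRad{S}\actm{B}^{*}=\QRad{S}$. Since $G<U_n$ acts on $\bergfock$ by the genuine (not merely projective) representation coming from the geometric action of $U_n$ on $\C^n$ (the cocycle $j(g,\cdot)$ trivializes when the translation part $z_g$ vanishes), we have $\actm{B}\actm{A}=\actm{BA}$ as bounded operators on $\bergfock$. Pulling $\actm{B}(\cdot)\actm{B}^{*}$ inside the weak integral and then using the left-invariance of $\mu$ under the change of variables $A\mapsto B^{-1}A$ yields the required $G$-invariance.

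For (iii), observe that if $S\in\bddqradX$ then $\actm{A}S\actm{A}^{*}=S$ for every $A\in G$, so the integrand in the definition of $\QRad{S}$ is the constant operator $S$; because $\mu$ is normalized, $\QRad{S}=S$. Combined with (ii), this shows $\QRad{\QRad{S}}=\QRad{S}$ for every $S\in\bddX$, completing the proof.

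The only real subtlety, and the step I would be most careful about, is that the integral defining $\QRad{S}$ is weak, so the identities in (ii) and (iii) should be verified after pairing with vectors $f_1,f_2\in\bergfock$. The scalar integrand $A\mapsto \ip{\actm{A}S\actm{A}^{*}f_1}{f_2}$ is continuous (by SOT continuity of $A\mapsto \actm{A}$ together with uniform boundedness on the compact $G$) and bounded, so Fubini and the invariance of $\mu$ justify all the manipulations. I do not expect any deeper obstacle beyond this routine bookkeeping.
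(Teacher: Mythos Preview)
Your proposal is correct and follows essentially the same route as the paper: verify $G$-invariance of $\QRad{S}$ by pulling the conjugation $\actm{B}(\cdot)\actm{B}^*$ inside the weak integral and applying left-invariance of the (normalized) Haar measure, then observe that $\QRad{S}=S$ whenever $S$ is already $G$-invariant, whence idempotence. Your write-up is in fact a bit more careful than the paper's, explicitly addressing continuity and the weak-integral bookkeeping.
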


\begin{proof}
    Notice that for $g_1\in G$ and $f_1,f_2\in\bergfock$
     \begin{align*}
         \ip{\actop{g}(\QRad{S})f_1}{f_2}&=\int_G \ip{\piG(g_1g)S\piG(g_1g)^*f_1}{f_2} \haar{g}\\
         &=\int_G \ip{\piG(g)S\piG(g)^*\piG(g_1)f_1}{f_2} \haar{g}\\
         &=\ip{\QRad{S}f_1}{f_2}
     \end{align*}
     and hence $\QRad{S}\in\Hom$. Also if $S\in \Hom$, $\QRad{S}=S$.
     Hence for $S\in \bddX$,
     $$\QRad{\QRad{S}}=\QRad{S}.$$    
\end{proof}

Also the set of all Toeplitz operators is invariant under quasi-radialization and for $a\in L^\infty(\X^n)$,
    $$\QRad{T_a}=T_{\qrad{a}}$$
    where the bounded $G$-invariant function $\qrad{a}$ is given by
    $$\qrad{a}(z)=\int_G a(A^{-1}z) \haar{A};\ \ z\in\B^n.$$
    The proof of this fact is similar to Lemma \ref{lem:Toeplitz_invariance}.

\subsection{Quasi-radial Toeplitz algebra over the Fock space}
As an application of Theorem \ref{theo:density_subToeplitzalgebras} we get the already known fact that the radial Toeplitz operators are dense in $\toepalgrad$.

\subsection{Quasi-radial Toeplitz algebra over the Bergman space}
     
    In \cite{X15}, Xia proved that Toeplitz operators are dense in the Toeplitz algebra over the Bergman space. In his theory, he used the notion of weakly localized operators introduced in \cite{IMW13} in comparison with Fulshe's correspondence theory for the Fock space. Here we observe that the density of Toeplitz operators holds for quasi-radial Toeplitz algebras as well. 

    \begin{theorem}
     The set of all $k$-quasi radial Toeplitz operators $\toep^{\text{Q-rad}}$ on the Bergman space $\Berg$ is dense in the quasi-radial Toeplitz algebra $\toepalgrad$. 
     $$\toepalgrad= \toepalg\cap \bddqradBerg= \overline{\{T_a\mid a\in \bddqradBerg\}}$$
 \end{theorem}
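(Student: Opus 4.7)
The argument rests on three ingredients: Xia's theorem \cite{X15} that $\toepalg = \overline{\{T_a \mid a \in L^\infty(\B^n)\}}$ on the Bergman space, the identity $\QRad{T_a} = T_{\qrad{a}}$ (proved exactly as in Lemma \ref{lem:Toeplitz_invariance}), and the fact, recorded just above the theorem, that quasi-radialization is a norm-continuous projection onto $\bddqradBerg$. The key simplification relative to the Fock-space Theorem \ref{theo:density_subToeplitzalgebras} is that $G = U_{n_1} \times \cdots \times U_{n_k}$ is compact, so the constant function $1$ lies in $L^1(G)$ and the averaging $\QRad{\cdot}$ may be applied directly, bypassing any approximate-identity or Wiener-division step.

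First I would establish the intersection characterization $\toepalg \cap \bddqradBerg = \QRad{\toepalg}$. Given $S \in \toepalg$, by Xia's theorem pick $a_k \in L^\infty(\B^n)$ with $T_{a_k} \to S$ in norm. Since $\QRad{T_{a_k}} = T_{\qrad{a_k}}$ is again a Toeplitz operator with bounded symbol, it lies in $\toepalg$; by the norm continuity of $\QRad{\cdot}$ the limit $\QRad{S}$ also lies in $\toepalg$, and of course $\QRad{S} \in \bddqradBerg$. Hence $\QRad{\toepalg} \subseteq \toepalg \cap \bddqradBerg$, and the reverse inclusion is immediate since $\QRad{\cdot}$ acts as the identity on $\bddqradBerg$.

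The very same approximation shows that $T_{\qrad{a_k}} = \QRad{T_{a_k}} \to \QRad{S}$ in norm, so every element of $\QRad{\toepalg}$ is a norm limit of Toeplitz operators with quasi-radial bounded symbols. Combined with the preceding step this yields
$$\toepalg \cap \bddqradBerg \;=\; \QRad{\toepalg} \;=\; \overline{\{T_b \mid b \in L^\infty(\B^n)^G\}}.$$
The right-hand side is contained in the closed subalgebra $\toepalgrad$ generated by quasi-radial Toeplitz operators, while the inclusion $\toepalgrad \subseteq \toepalg \cap \bddqradBerg$ is immediate from the definitions (products and norm-limits preserve both membership in $\toepalg$ and $G$-invariance). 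Hence all three sets coincide, which in particular shows that the closed subalgebra generated by quasi-radial Toeplitz operators is already recovered by their closed linear span.

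The only real obstacle is that the argument uses Xia's theorem as a black box; his proof rests on the weakly localized operators of \cite{IMW13}, which is quite different in spirit from the quantum harmonic analysis machinery used elsewhere in this paper. Once Xia is granted, however, the compactness of $G$ does all the work, and no analog of the Wiener-division step from the proof of Theorem \ref{theo:density_subToeplitzalgebras} is needed.
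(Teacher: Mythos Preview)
Your proof is correct and follows essentially the same approach as the paper: both rely on Xia's density theorem, the identity $\QRad{T_a}=T_{\qrad{a}}$, and the fact that $\QRad{\cdot}$ is a norm-continuous projection onto $\bddqradBerg$, and both conclude by applying $\QRad{\cdot}$ to an approximation $T_a\to S$ coming from Xia. The paper is marginally more direct---it simply fixes $S\in\toepalg\cap\bddqradBerg$, chooses $a$ with $\|S-T_a\|<\epsilon$, and observes $\|S-T_{\qrad{a}}\|=\|\QRad{S}-\QRad{T_a}\|\leq\|S-T_a\|<\epsilon$---whereas you pass through the intermediate identification $\toepalg\cap\bddqradBerg=\QRad{\toepalg}$, but the substance is identical.
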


 \begin{proof}
     The following inclusions are immediate:
     $$\overline{\{T_a\mid a\in \bddqradBerg\}}\subset \toepalgrad \subset (\toepalg \cap \bddqradBerg).$$
     To prove equality, let $S\in \toepalg \cap \bddqradBerg$ and let $\epsilon>0$. Then since Toeplitz operators are dense in $\toepalg$, we have $a\in L^\infty(\B^n)$ s.t.
     $$\|S-T_a\|<\epsilon.$$
     As $S\in \bddqradBerg$, we have $\QRad{S}=S$. Also $\QRad{T_a}=T_{\qrad{a}}$. Hence
     \begin{align*}
         \|S-T_{\qrad{a}}\|&=\|\QRad{S}-\QRad{T_a}\|\\
         &\leq \|S-T_a\|\\
         &<\epsilon.
     \end{align*}
     Then since $\qrad{a}$ is $G$-invariant, we have $S\in \overline{\{T_a\mid a\in \bddqradBerg\}}$. Therefore
     $$\toepalg \cap \bddqradBerg\subset \overline{\{T_a\mid a\in \bddqradBerg\}}$$
     completing the proof.
 \end{proof}

%======================================================================
%                 -SOT Density-
%======================================================================

\section{SOT Density of Toeplitz operators in \texorpdfstring{$\bdd$}{bounded operators}}

In  \cite{E91} Engli\u{s} proved that the Toeplitz operators are strongly dense in $\bdd$. However, his proof is not constructive. 
We conclude our discussion by giving a constructive proof of this fact as an interesting application of the techniques in this note.

\begin{proposition}\label{prop:L1approximate_identity}
Let  $\{\psi_t\}$ be an approximate identity in $L^1(\C^n)$ and $S\in\bdd$. Then 
    $$\psi_t\ast S\to S\ \ \text{ in SOT}.$$
    Moreover, the algebra $\bdd$ is the closure of $\cbuop$ in strong operator topology.
\end{proposition}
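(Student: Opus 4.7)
The plan is to establish the SOT convergence $\psi_t \ast S \to S$ by a direct approximate-identity argument on vectors, and then immediately deduce the density statement.

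First I would fix an arbitrary $f \in \F$ and test convergence in the norm of $\F$. Assuming (as is standard) that $\int \psi_t(z)\,dz \to 1$ and that $\|\psi_t\|_1$ is uniformly bounded, I would write
\[
(\psi_t \ast S)f - Sf = \int \psi_t(z)\bigl(L_z Sf - Sf\bigr)\,dz + \Bigl(\int \psi_t(z)\,dz - 1\Bigr)Sf.
\]
The critical input is property (i) from Subsection \ref{sec:convolutions}: the map $z \mapsto L_z S$ is continuous in SOT, so $z \mapsto L_z Sf$ is a norm-continuous map from $\C^n$ into $\F$ taking the value $Sf$ at $z = 0$. Given $\varepsilon > 0$, I would choose $\delta > 0$ with $\|L_z Sf - Sf\| < \varepsilon$ for $|z| < \delta$ and split the integral at $|z| = \delta$, bounding the first term by
\[
\varepsilon \|\psi_t\|_1 + 2\|S\|\|f\|\int_{|z|\geq\delta}|\psi_t(z)|\,dz.
\]
The tail integral vanishes as $t \to 0$ since $\{\psi_t\}$ concentrates at the origin, while the second term vanishes by assumption on the total mass. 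Together these give $\|(\psi_t \ast S)f - Sf\| \to 0$, which is precisely SOT convergence since $f$ was arbitrary.

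For the second claim, I would simply recall from Subsection \ref{sec:convolutions} that $\psi_t \ast S \in \cbuop$ for every $t$, since operator convolutions with $L^1$ functions always produce uniformly continuous operators. The first part then realizes every $S \in \bdd$ as an SOT limit of elements of $\cbuop$, giving $\bdd \subseteq \overline{\cbuop}^{\mathrm{SOT}}$; the reverse inclusion is trivial.

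The only point requiring care is distinguishing SOT from norm continuity of the translation action: the map $z \mapsto L_z S$ is in general only SOT-continuous (norm continuity would already force $S \in \cbuop$), but SOT continuity is exactly enough to make $z \mapsto L_z Sf$ norm-continuous in $\F$ for each fixed $f$, and that is precisely the input needed for the classical approximate-identity estimate. Beyond this dichotomy, the argument is routine and I do not anticipate any further obstacles.
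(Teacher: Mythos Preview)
Your argument is correct and follows essentially the same route as the paper: both use the SOT-continuity of $z\mapsto L_zS$ to get norm-continuity of $z\mapsto L_zSf$, split the convolution integral into a small-$|z|$ piece controlled by continuity and a tail piece controlled by the concentration of $\psi_t$, and then deduce the density statement from $\psi_t\ast S\in\cbuop$. Your version is in fact slightly more careful in that you explicitly isolate the total-mass term $(\int\psi_t-1)Sf$, whereas the paper tacitly assumes $\int\psi_t=1$.
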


\begin{proof}
    Let $f\in \F$ and let $\epsilon>0$. Then since $z\mapsto \actop{z}Sf$ is continuous, there exists a $\delta>0$ s.t. when  $|z|<\delta$,
    $$\|(\actop{z}S-S)f\|<\frac{\epsilon}{2}.$$
    Also choose $r>0$ s.t.  
    $$\|\psi_t\chi_{\{z\ |\ |z|>r\}}\|_1<\frac{\epsilon}{2(\|S\|+1)\|f_1\|}.$$
    Notice that \begin{align*}
        \ip{(\psi_t\ast S-S)f_1}{f_2}&= \ip{(\psi_t \ast S)f_1}{f_2}-\ip{Sf_1}{f_2}\\
        &= \int \ip{L_zSf_1}{f_2} \ \psi_t(z)\ dz- \ip{Sf_1}{f_2}\\
        &= \int \ip{(L_zS-S)f_1}{f_2} \ \psi_t(z) \ dz.
    \end{align*}
    Hence for all $t<r$,
    \begin{align*}
        |\ip{(\psi_t\ast S-S)f_1}{f_2}|&\leq \int_{\{|z|<r\}} \|(L_zS-S)f_1\|\|f_2\| \ \psi_t (z)\ dz\\
        &\hspace{3cm}+\int_{\{|z|>r\}} \|L_zS-S\|\|f_1\|\|f_2\| \ \psi_t(z)\ dz\\
        &\leq \Big( \frac{\epsilon}{2}+2\|S\|\|f_1\|\int_{\{|z|>r\}}  |\psi_t(z)|\haar{z} \Big) \|f_2\|\\
        &\leq ( \frac{\epsilon}{2}+ 2\|S\|\|f_1\| ) \|f_2\|\\
        &\leq \epsilon \|f_2\|.
    \end{align*}
    Hence $\|(\psi_t\ast S-S)f\|<\epsilon$ for all $t<r$ and $\psi_t\ast S\to S$ in SOT.
    The second statement is true because $\psi_t\ast S\in \cbuop$.
\end{proof}

\begin{theorem}
    For any bounded operator $S\in \bdd$ there exists a sequence of Toeplitz operators with uniformly continuous symbols which converges to $S$ is SOT. \end{theorem}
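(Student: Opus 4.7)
The plan is a two-step argument: first approximate $S$ in SOT by uniformly continuous operators via convolution with an $L^1$ approximate identity, then approximate each of those in norm by Toeplitz operators with uniformly continuous symbols. Both ingredients have already been established in the paper.

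Concretely, I would fix an approximate identity $\{\psi_n\}_{n\in\N} \subset L^1(\C^n)$, for instance the usual scaled dilations of a fixed nonnegative $L^1$ function of unit mass. By Proposition \ref{prop:L1approximate_identity}, $\psi_n \ast S \to S$ in the strong operator topology. Each convolution $\psi_n \ast S$ lies in $\cbuop$, and by the corollary following Proposition \ref{prop:Wiener's theorem} (stating $\cbuop = \overline{\{T_a : a \in \cbuC\}}$ in norm), I can select $a_n \in \cbuC$ such that
$$\|\psi_n \ast S - T_{a_n}\| < \tfrac{1}{n}.$$

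Then the sequence $\{T_{a_n}\}$ is the one the theorem asks for: for every $f \in \F$,
$$\|(T_{a_n} - S)f\| \leq \|T_{a_n} - \psi_n \ast S\|\,\|f\| + \|(\psi_n \ast S - S)f\| \leq \frac{\|f\|}{n} + \|(\psi_n \ast S - S)f\|,$$
and both summands tend to $0$. Hence $T_{a_n} \to S$ in SOT, as desired.

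I do not expect any substantive obstacle here: the content of the theorem is bundled into the two approximation steps above, and each has already been proved earlier (the SOT convergence $\psi_n \ast S \to S$ in Section 6, and the norm density $\cbuop = \overline{\{T_a : a \in \cbuC\}}$ as a consequence of Fulsche's correspondence theory in Section 3). The proof is essentially a one-line diagonal combination of these two facts, which is the sense in which it gives a constructive version of Engli\v{s}'s result.
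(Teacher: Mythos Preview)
Your proof is correct, but it differs from the paper's in a way worth noting. You use a two-step argument: first $\psi_n\ast S\to S$ in SOT, then invoke the norm density $\cbuop=\overline{\{T_a:a\in\cbuC\}}$ to replace each $\psi_n\ast S$ by a nearby Toeplitz operator. The paper instead collapses this to a single step: choosing the approximate identity $\{\psi_t\}$ so that each $\hat\psi_t$ has compact support, Wiener's division lemma (Lemma~\ref{lem:division}) gives $h_t\in L^1(\C^n)$ with $\psi_t=\varphi\ast h_t$, and then
\[
\psi_t\ast S=(\varphi\ast h_t)\ast S=h_t\ast(\varphi\ast S)=h_t\ast T_{B(S)}=T_{h_t\ast B(S)}.
\]
So $\psi_t\ast S$ is \emph{itself} a Toeplitz operator with the explicit uniformly continuous symbol $h_t\ast B(S)$, and no second approximation is needed. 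This is what justifies the paper's claim of a ``constructive'' proof: the approximating symbols are written down directly in terms of $B(S)$. Your argument, by contrast, appeals to the norm-density corollary to select $a_n$, which is nonconstructive in that step (and that corollary is itself a consequence of the same circle of Tauberian ideas). Both arguments are valid; the paper's buys explicitness, yours buys a slightly cleaner logical decomposition into two already-proved facts.
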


\begin{proof}
     Let $\{\psi_t\}$  be an approximate identity in $L^1(\C^n)$ s.t. each $\hat{\psi}_t$ has compact support. The convergence of $\psi_t\ast S$ to $S$ in SOT follows from Proposition \ref{prop:L1approximate_identity}. Since the Gaussian $\varphi$ is regular, there is $h_t\in L^1(\C^n)$ s.t. $\psi_t=\varphi \ast h_t$ by Wiener's division lemma \ref{lem:division}. Hence
    $$\psi_t \ast  S=h_t \ast (\varphi \ast S)=h_t \ast (T_{B(S)})=T_{h_t\ast B(S)}.$$ Finally, observe that $h_t\ast B(S)$ are all uniformly continuous as convolutions of $L^1$ and $L^\infty$ functions. 
\end{proof}

\noindent \textbf{Acknowledgements:} We would like to thank Gestur \'Olafsson for useful discussions.

%========================================================================
%                 -BIBILOGRAPHY-
%========================================================================


\begin{thebibliography}{XX}

\bibitem[B22]{B22} W. Bauer, M. A. Rodriguez Rodriguez, \textit{Commutative Toeplitz algebras and their Gelfand theory: old and new results}, Complex Anal. Oper. Theory \textbf{16} (2022), no. 6, paper no. 77, 37 pp.

\bibitem[BI12]{BI12} W. Bauer, H. Issa, \textit{Commuting toeplitz operators with quasi-homogeneous symbols on the Segal–Bargmann space}, J. Math. Anal. Appl. \textbf{386} (2012), 213--235. 

\bibitem[BL11]{BL11} W. Bauer, Y.J. Lee, \textit{Commuting toeplitz operators on the Segal–Bargmann space}. J. Funct.Anal. \textbf{260} (2011), 460--489.

\bibitem[BV12]{BV12}  W. Bauer; N. L. Vasilevski,  \textit{On the structure of a commutative Banach algebra generated By Toeplitz operators with quasi-radial quasi-homogeneous symbols.} Integral Equations Operator Theory \textbf{74} (2012), no. 2, 199--231.

\bibitem[DO22]{DO22} V. Dewage, G. Olafsson, \textit{Toeplitz operators on the Fock space with quasi-radial symbols}, Complex Anal. Oper. Theory \textbf{16} (2022), no. 4, Paper no. 61, 32 pp.

\bibitem[D{\'O}Q15]{DOQ15} M. Dawson, G. \'Olafsson, R. Quiroga-Barranco, \textit{Commuting Toeplitz operators on bounded symmetric domains and multiplicity-free restrictions of holomorphic discrete series}, Journal of Functional Analysis \textbf{268} (2015), no. 7, 1711--1732.

\bibitem[D{\'O}Q18]{DOQ18} M. Dawson, G. \'Olafsson, R. Quiroga-Barranco, \textit{The restriction principle and commuting families of Toeplitz operators on the unit ball}, Sao Paulo J.Math \textbf{12} (2018), no. 7, 196--226.

\bibitem[E91]{E91} M. Engli\u{s}, \textit{Density of algebras generated by Toeplitz operator on Bergman spaces.}, Ark. Mat. \textbf{30} (1992), no.2, 227–243.

\bibitem[EM16]{EM16} K. Esmeral, E. Maximenko, \textit{Radial Toeplitz operators on the Fock space and square-root-slowly oscillating sequences},
Complex Anal. Oper. Theory \textbf{10} (2016), 1655--1677.

\bibitem[F19]{F19} R. Fulshe, \textit{Correspondence theory on p-Fock spaces with applications to Toeplitz algebras}, J. Funct. Anal. \textbf{279} (2020), no.7, 108661, 41 pp.

\bibitem[FG23]{FG23} R. Fulshe, N. Galke, \textit{Quantum Harmonic Analysis on locally compact abelian groups}, 	arXiv:2308.02078.

\bibitem[FR23]{FR23} R. Fulshe, M. A. Rodriguez Rodriguez, \textit{Commutative G-invariant Toeplitz $C^*$-algebras on the Fock space and their Gelfand theory through Quantum Harmonic Analysis}, 	arXiv:2307.15632.

\bibitem[GKV03]{GKV03} E. A. Grudsky, A. Karapetyants, N. L. Vasilevski,  \textit{Toeplitz operators on the unit ball in $\C^n $ with radial symbols}, Journal of Operator Theory \textbf{49} (2003), no. 2, 325--346.

\bibitem[GMV13]{GMV13}  S. M. Grudsky, E. A. Maximenko,  N. L. Vasilevski,  \textit{Radial Toeplitz operators on the unit ball and slowly oscillating sequences}, Commun. Math. Anal. \textbf{14} (2013), no. 2, 77--94.

\bibitem[GV02]{GV02} E. A. Grudsky,  N. L. Vasilevski,  \textit{Toeplitz operators on the Fock space: Radial component effects}, Integral Equations Operator Theory \textbf{44} (2002), no. 1, 10--37.

\bibitem[GV14]{GV14} A. Garcia, N. Vasilevski, \textit{Toeplitz operators on the weighted Bergman space over the two-dimensional unit ball}, J. Funct. Spaces (2015), Art. ID 306168, 10 pp.

\bibitem[IMW13]{IMW13} J. Istralowitz, M. Mitkovski, B. Wick \textit{Localization and compactness in Bergman and Fock spaces}, Indiana Univ. Math. J. \textbf{64} (2015), no. 5, 1553–1573.

\bibitem[LS18]{LS18} F. Luef, E. Skrettingland \textit{Convolutions for localization operators}, J. Math. Pures Appl. \textbf{118} (2018), no. 9, 288–316.

\bibitem[QV07]{QV07} R. Quiroga-Barranco and N. Vasilevski, \textit{Commutative $C^*$-algebras of Toeplitz operators on the unit ball. I. Bargmann-type transforms and spectral representations of Toeplitz operators}, Integral Equations Operator Theory \textbf{59} (2007), no. 3, 379--419.

\bibitem[RS00]{RS00} H. Reiter, J. Stegeman,  "Classical Harmonic Analysis and Locally Compact Groups, Oxford University Press, 2000.

%\bibitem[M02]{M02} R. Miculescu, \textit{Approximations by Lipschitz functions generated by extensions}, Real Anal. Exchange \textbf{28} (2002/03/2003), no.1, 33–40.

%\bibitem[S04]{S04} D. Su\'arez, \textit{Approximation and symbolic calculus for Toeplitz algebras on the Bergman space}, Rev. Mat. Iberoamericana \textbf{20} (2004), no. 2, 563–610.

%\bibitem[S05]{S05} D. Su\'arez, \textit{Approximation and the $n$-Berezin transform of operators on the Bergman space}, J. Reine Angew. Math. \textbf{581} (2005), 175–192.

\bibitem[S08]{S08} D. Su\'arez, \textit{The eigenvalues of limits of radial Toeplitz operators}, Bull. Lond. Math. Soc. \textbf{40} (2008), no. 4, 631–641.

\bibitem[V08]{V08} N. L.Vasilevski, \textit{Commutative algebras of Toeplitz operators on the Bergman space}, Operator Theory, Advances and Applications \textbf{185}, Birkh\"auser, 2008. 

\bibitem[V10]{V10} N. L.Vasilevski, \textit{Quasi-radial quasi-homogeneous symbols and commutative Banach algebras of Toeplitz operators}. Integral Equations Operator Theory \textbf{66} (2010), no. 1, 141--152.


\bibitem[W84]{W84} R. Werner, \textit{Quantum harmonic analysis on phase space}, J. Math. Phys. \textbf{25} (1984), no. 5, 1404–1411.

\bibitem[X15]{X15} J. Xia, \textit{Localization and the Toeplitz algebra on the Bergman space}, J. Funct. Anal. \textbf{269} (2015), no. 3, 781–814.

\bibitem[Z02]{Z02} N. Zorboska, \textit{The Berezin transform and radial Operators}, Proc. Amer. Math. Soc. \textbf{131} (2003), no.3, 793–800.




%======================================================================
%\bibitem[S91]{S91} K. Schm\"udgen, \textit{The K-moment problem for compact semi-algebraic sets}, Math. Ann. \textbf{289} (1991), pp. 203–206.

%\bibitem[B75a]{B75a}F.A. Berezin, \textit{General concept of quantization}. Comm. Math. Phys. \textbf{40} (1975), 153--174. 

%\bibitem[B75b]{B75b}F.A. Berezin, \textit{Quantization in complex symmetric spaces}. Math. USSR-Izv. \textbf{9} (1975), 341--379.

%\bibitem[B61]{B61} V. Bargmann, \textit{On a Hilbert space of analytic functions and an associated integral transform I}. Comm. Pure Appl. Math. \textbf{14} (1961), 187--214.

%\bibitem[BC86]{BC86} C. Berger, L. Coburn, \textit{Toeplitz operators and quantum mechanics}. J. Funct. Anal. \textbf{68} (1986), 273--299.


%\bibitem[BS06]{BS06} A. B\"ottcher, B. Silbermann, "Analysis of Toeplitz Operators". Second edition. Prepared jointly with Alexei Karlovich. Springer Monographs in Mathematics. Springer-Verlag, Berlin, 2006.

%\bibitem[C94]{C94} L. Coburn, "Berezin–Toeplitz quantization, in Algebraic Methods in Operator Theory",(Birkhauser, Boston, 1994)  101--108.

%\bibitem[DQ18]{DQ18} M. Dawson, R. Quiroga-Barranco, \textit{Radial Toeplitz operators on the weighted Bergman spaces of Cartan domains}. Representation theory and harmonic analysis on symmetric spaces, 97--114, Contemp. Math., {\bf 714}, AMS,  2018.

%\bibitem[EU10]{EU10} M. Englis, H. Upmeier, \textit{Toeplitz Quantization and Asymptotic Expansions: Peter-Weyl Decomposition}, Integral Equations Operator Theory, \textbf{68} No. 3 (2010), 427--449.


%\bibitem[EU11]{EU11} M. Englis, H. Upmeier, \textit{Toeplitz quantization and asymptotic expansions for real bounded symmetric domains}, Math Z, \textbf{268} No.3–4 (2011), 931--967. 

%\bibitem[F99]{F99} G. B. Folland,  "Real analysis: Modern Techniques and Their Applications". New York: Wiley, 1999.

%\bibitem[F01]{F01} G. B. Folland,  \textit{How to integrate a polynomial over a sphere}. Amer. Math. Monthly \textbf{108} (2001), No 5, 446--448.

%\bibitem[FK94]{FK94} J. Faraut, A. Koranyi, ``Analysis on Symmetric Cones,'' Oxford Mathematical Monographs, Oxford University Press, 1994.

%\bibitem[F{\'O}14]{FO14} R. Fabec, G. \'Olafsson, "Non-Commutative Harmonic Analysis", Drexville Publishing, St.Gabriel 2014.

%\bibitem[GQV06]{GQV06} S. Grudsky, R. Quiroga--Barranco, N. Vsilevski, \textit{Commutative $C^*$-algebras of Toeplitz operators and quantization on the unit disk}, J. Funct. Anal. \textbf{234} (2006), 1--44.

%\bibitem[QS11]{QS11} R. Quiroga-Barranco and A. Sanchez-Nungaray, \textit{Commutative $C^*$-algebras of Toeplitz operators on complex projective spaces}, Integral Equations Operator Theory \textbf{71} (2011), No. 2, 225--243.

%\bibitem[QS14]{QS14} R. Quiroga-Barranco, A. Sanchez-Nungaray, Armando \textit{Toeplitz operators with quasi-radial quasi-homogeneous symbols and bundles of Lagrangian frames}. J. Operator Theory 71 (2014), no. 1, 199--222. 

%\bibitem[QS15]{QS15} R. Quiroga-Barranco, Raul, A. Sanchez-Nungaray,  \textit{Toeplitz operators with quasi-homogeneous quasi-radial symbols on some weakly pseudoconvex domains}. Complex Anal. Oper. Theory \textbf{9} (2015), no. 5, 1111--1134. 

%\bibitem[QV07a]{QV07a}   R. Quiroga-Barranco and N. Vasilevski, \textit{Commutative algebras of Toeplitz operators on the Reinhardt domains}, Integral Equations Operator Theory \textbf{59} (2007), No. 1, 67--98.


%\bibitem[QV08]{QV08}   R. Quiroga-Barranco and N. Vasilevski, \textit{Commutative $C^*$-algebras of Toeplitz operators on the unit ball. II. Geometry of the level sets of symbols}, Integral Equations Operator Theory \textbf{60} (2008), No. 1, 89--132.

%\bibitem[MSR16]{MSR16} M. A. Morales-Ramos, A. Sánchez-Nungaray, J. Ramírez-Ortega, \textit{Toeplitz operators with quasi-separately radial symbols on the complex projective space}. Bol. Soc. Mat. Mex. (3) 22 (2016), no. 1, 213--227. 


 
%\bibitem[V10b]{V10b} N. L.Vasilevski, \textit{Parabolic quasi-radial quasi-homogeneous symbols and commutative algebras of Toeplitz operators, in Topics in Operator Theory}. Operator Theory: Advances and Applications, Volume 202 . Birkhäuser Basel, 2010.

%\bibitem[W03]{W03} D. P. Williams  \textit{Tensor products with bounded continuous functions}, New York Journal of Mathematics \textbf{9} (2003),  69--77.

%\bibitem[Z12]{Z12} K. Zhu,  "Analysis on the Fock Space," Graduate texts in Mathematics, Springer US, 2012.



\end{thebibliography}
\end{document}